\def\inte#1{
\displaystyle\mathop{#1\kern0pt}^\circ }
\def\cF{{\mathcal F}}
\def\grad{\nabla}
\def\virgp{\raise 2pt\hbox{,}}
\def\cdotpv{\raise 2pt\hbox{;}}
\def\C{\mathop{\mathbb C\kern 0pt}\nolimits}
\def\DD{\mathop{\mathbb D\kern 0pt}\nolimits}
\def\EE{\mathop{{\mathbb E \kern 0pt}}\nolimits}
\def\K{\mathop{\mathbb K\kern 0pt}\nolimits}
\def\N{\mathop{\mathbb N\kern 0pt}\nolimits}
\def\Q{\mathop{\mathbb Q\kern 0pt}\nolimits}
\def\R{\mathop{\mathbb R\kern 0pt}\nolimits}
\def\SS{\mathop{\mathbb S\kern 0pt}\nolimits}
\def\ZZ{\mathop{\mathbb Z\kern 0pt}\nolimits}
\def\TT{\mathop{\mathbb T\kern 0pt}\nolimits}
\def\P{\mathop{\mathbb P\kern 0pt}\nolimits}
\newcommand{\beq}{\begin{equation}}
\newcommand{\eeq}{\end{equation}}
\newcommand{\ben}{\begin{eqnarray}}
\newcommand{\een}{\end{eqnarray}}
\newcommand{\beno}{\begin{eqnarray*}}
\newcommand{\eeno}{\end{eqnarray*}}
\newtheorem{thm}{Theorem}[section]
\newtheorem{lem}{Lemma}[section]
\renewcommand{\theequation}{\thesection.\arabic{equation}}
\begin{document}
\title[well-posedness of a kind of the free surface equation of shallow water wave]
{well-posedness of a kind of the free surface equation of shallow water wave}

\author[M.M.D]{MiaoMiao Dang}
\address[]{School of Mathematics, Northwest University, Xi'an 710127, China}
\email{dmm0819@163.com}
\author[Z.Li]{Zhouyu Li}
\address[]{School of Mathematics, Northwest University, Xi'an 710069, China}
\email{zylimath@163.com}

\maketitle
\begin{abstract}
This paper is concerned with the Cauchy problem of the one-dimensional free surface equation of shallow water wave,
we obtain local well-posedness of the free surface equation of shallow water wave in Sobolev spaces. In addition, we
also derive a wave-breaking mechanism for strong solutions.


\end{abstract}

\noindent {\sl Keywords:} Local Well-posedness; Wave-breaking; Shallow water.
\vskip 0.2cm

\noindent {\sl AMS Subject Classification (2000):} 35G25, 35Q58  \\

\renewcommand{\theequation}{\thesection.\arabic{equation}}
\setcounter{equation}{0}

\section{Introduction}

For one-dimensional surfaces, the water waves equations read in the following nondimensionalized form
\begin{equation}\label{CH-0}
    \begin{cases}
        \mu\partial^{2}_x\Phi+\partial_z\Phi^{2}=0 \; &  \text{in} \; \Omega_{t},\\
        \partial_z\Phi=0 \quad &\text{at} \; z=-1,\\
        \partial_t\eta-\displaystyle\frac{1}{\mu}(-\mu\partial_x\eta\partial_x\Phi+\partial_z\Phi)=0 \quad & \text{at}\; z=\varepsilon\eta,\\
        \partial_t\Phi+\displaystyle\frac{\varepsilon}{2}(\partial_{x}\Phi)^{2}+\displaystyle\frac{\varepsilon}{2\mu}(\partial_{z}\Phi)^{2}=0 \quad & \text{at} \; z=\varepsilon\eta,
    \end{cases}
    \end{equation}
where $\varepsilon$ and $\mu$ are two dimensionless parameters defined as
$$\varepsilon=\displaystyle\frac{a}{h},\quad \mu=\displaystyle\frac{h^{2}}{\lambda^{2}},$$
and $h$ is the mean depth, $a$ is the typical amplitude and $\lambda$ the typical wavelength
of the waves under consideration.
Where $x\longmapsto \varepsilon\eta(t, x)$ parameterizes the elevation of the free surface at time $t$,
 $\Omega_{t}=\{(x, z), -1<z<\varepsilon\eta(t, x)\}$ is the fluid domain delimited by the free surface
and the flat bottom $\{z=-1\}$, and where $\Phi(t, \cdot)$ (defined on $\Omega_{t}$) is the velocity
potential associated to the flow (that is, the two-dimensional velocity field $v$ is given
by $v=(\partial_{x}\Phi,\partial_t\Phi)^{T})$.

Making assumptions on the respective size of $\varepsilon$
and $\mu$, one is led to derive (simpler) asymptotic models from \eqref{CH-0}.
In the shallow-water scaling ($\mu\ll1$), one can derive the so-called Green-Naghdi (GN) equations (see \cite{Green1976} for the derivation, and \cite{Samaniego1} for a rigorous justification),
without any smallness assumption on $\varepsilon$ (that is, $\varepsilon= O(1)$).
\begin{equation}\label{CH-1}
    \begin{cases}
      \eta_{t}+[(1+\varepsilon\eta)u]_{x}=0, \\
       u_{t}+\eta_{x}+\varepsilon uu_{x}=\displaystyle\frac{\mu}{3}\displaystyle\frac{1}{1+\varepsilon\eta}[{1+\varepsilon\eta}^{3}(u_{xt}+\varepsilon uu_{x}-\varepsilon u^{2}_{x})]_{x},
    \end{cases}
    \end{equation}
where $u(t, x)=\displaystyle\frac{1}{1+\varepsilon\eta}\int^{\varepsilon\eta}_{-1}\partial_{x}\Phi(t, x, z)dz$ denotes vertically
averaged horizontal component of the velocity.

Because of the complexity of water-waves problem, they are often replaced for practical purposes by
approximate asymptotic systems. The most prominent examples are the GN equations, which is a widely
used model in coastal oceanography.

A recent rigorous justification of the GN model was given by \cite{Li2006} in 1D and for flat bottoms, which is  based on the energy estimates and the proof of the well-posedness for the GN equations and the water wave problem , and by  Alvarez-Samaniego and Lannes \cite{Samaniego2} allowed losses of derivatives in this energy estimate and therefore construct a solution by a Nash-Moser iterative scheme and proved the well-posedness of the GN equation in 1D and 2D and discuss the problem of their validity as asymptotic models for the water-waves equations.

Recently, Gui and Liu  \cite{Gui-Liu-Sun2016} consider the 1-D R-CH equation is well-posed and show that the deviation of the free surface can be determined by the horizontal velocity at a certain depth in the second-order approximation.

In this paper, we consider the well-posedness of the free surface equation, which approximate solutions consistent with the GN equation.

The family of equations
\begin{equation}\label{CH-2}
\eta_{t}+\eta_{x}+\displaystyle\frac{3}{2}\varepsilon\eta\eta_{x}-\displaystyle\frac{3}{8}\varepsilon^{2}\eta^{2}\eta_{x}+\displaystyle\frac{3}{16}\varepsilon^{3}\eta^{3}\eta_{x}
+\mu(\alpha\eta_{xxx}+\beta\eta_{xxt})=\varepsilon\mu(\gamma\eta\eta_{xxx}+\delta\eta_{x}\eta_{xx}),
\end{equation}
for the evolution of the surface elevation can be used to construct an approximate
solution consistent with the GN equations(see \cite{Constantic2009}). Where $\varepsilon, \mu, \alpha, \beta, \gamma$ and $\delta$ are constants.

Let $q\in \R$ and assume that
$$\alpha=q,\beta=q-\displaystyle\frac{1}{6},\gamma=-\displaystyle\frac{3}{2}q-\displaystyle\frac{1}{6},\delta=-\displaystyle\frac{9}{2}q-\displaystyle\frac{5}{24}$$
especially, choosing $q=\displaystyle\frac{1}{12},\mu=12,\varepsilon=1$ the equation \eqref{CH-2} reads
\begin{equation}\label{CH-3}
\eta_{t}+\eta_{x}+\displaystyle\frac{3}{2}\eta\eta_{x}-\displaystyle\frac{3}{8}\eta^{2}\eta_{x}+\displaystyle\frac{3}{16}\eta^{3}\eta_{x}
+\eta_{xxx}-\eta_{xxt}=-\displaystyle\frac{7}{2}\eta\eta_{xxx}-7\eta_{x}\eta_{xx}.
\end{equation}
\quad In this paper, we will investigate well-posedness of the Cauchy problem of the equivalent form of the free surface equation \eqref{CH-3}:
\begin{equation}\label{1.4}
    \begin{cases}
   & \partial_{t}\eta-\partial_{x}\eta-\frac{7}{2}\eta\partial_{x}\eta=(1-\partial_{x}^{2})^{-1}\partial_{x}
       \big(-2\eta-\frac{5}{2}\eta^{2}+\frac{7}{4}(\partial_{x}\eta)^{2}+\frac{1}{8}\eta^{3}-\frac{3}{64}\eta^{4}\big),\\
       &\qquad\qquad\qquad\qquad\qquad\qquad\qquad\qquad\qquad\qquad\qquad\qquad \forall \, t > 0,\, x \in \R,\\
       &\eta|_{t=0}=\eta_{0},\qquad \qquad \forall \, x\in \R,
    \end{cases}
\end{equation}
and show the wave-breaking phenomenon. Our main results are stated as follows.

\begin{thm}\label{thm-main-1}
Suppose that $\eta_{0}\in H^{s}(\R)$ with $s>\frac{3}{2},$ then there exist a positive time $T>0$ such that, the equation \eqref{1.4} has a unique strong solution $\eta\in{\mathcal{C}}([0,T];H^{s})\cap\mathcal{C}^{1}([0,T]; H^{s-1})$ and the map $\eta_{0}\mapsto \eta$ is continuous from a neighborhood of $\eta_{0}$ in $H^{s}$ into $\eta \in {\mathcal{C}}([0,T];H^s)\cap{\mathcal{C}}^{1}([0,T]; H^{s-1}).$ Moreover, the energy
\begin{equation}\label{lem-1}
H=H(\eta)=\frac{1}{2}\int_{\R}(\eta^{2}+\eta_{x}^{2})dx,
\end{equation}
is independent of the existence time $t\in[0,T).$
\end{thm}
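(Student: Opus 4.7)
The strategy is to treat \eqref{1.4} as a quasi-linear transport equation with a smoothing nonlocal source. Writing it in the form
\[
\partial_t\eta + a(\eta)\,\partial_x\eta = F(\eta), \qquad a(\eta) := -\Bigl(1+\tfrac{7}{2}\eta\Bigr),
\]
\[
F(\eta) := (1-\partial_x^{2})^{-1}\partial_x\Bigl(-2\eta - \tfrac{5}{2}\eta^{2} + \tfrac{7}{4}(\partial_x\eta)^{2} + \tfrac{1}{8}\eta^{3} - \tfrac{3}{64}\eta^{4}\Bigr),
\]
the crucial feature is that $(1-\partial_x^{2})^{-1}\partial_x$ is smoothing of order one, so $F$ maps $H^{s}(\R)$ into itself continuously for $s>\tfrac{3}{2}$ by the algebra property of $H^{s}$ and Moser-type estimates; in particular the quadratic gradient term $\tfrac{7}{4}(\partial_x\eta)^{2}$ is harmless because $\partial_x\eta\in H^{s-1}\hookrightarrow L^{\infty}$ and one derivative is recovered by the smoothing operator.

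I would construct a solution by the Picard iteration $\eta^{(0)}\equiv\eta_{0}$ and
\[
\partial_t\eta^{(n+1)} + a(\eta^{(n)})\partial_x\eta^{(n+1)} = F(\eta^{(n)}),\qquad \eta^{(n+1)}|_{t=0}=\eta_{0},
\]
each iterate being solvable in $\cC([0,T];H^{s})$ by the classical theory of linear transport equations, using $a(\eta^{(n)})\in \cC([0,T];H^{s})\hookrightarrow \cC([0,T];W^{1,\infty})$. For the uniform bound, I apply $\Lambda^{s}=(1-\partial_x^{2})^{s/2}$ and take the $L^{2}$ inner product with $\Lambda^{s}\eta^{(n+1)}$, which produces three contributions: the transport identity $\int a(\eta^{(n)})\partial_x \Lambda^{s}\eta^{(n+1)}\cdot\Lambda^{s}\eta^{(n+1)}\,dx = -\tfrac{1}{2}\int(\partial_x a(\eta^{(n)}))(\Lambda^{s}\eta^{(n+1)})^{2}\,dx$; a Kato--Ponce commutator bound $\|[\Lambda^{s},a(\eta^{(n)})]\partial_x\eta^{(n+1)}\|_{L^{2}}\lesssim \|a(\eta^{(n)})\|_{H^{s}}\|\eta^{(n+1)}\|_{H^{s}}$; and the nonlinear source estimate $\|F(\eta^{(n)})\|_{H^{s}}\le C(\|\eta^{(n)}\|_{H^{s}})$. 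A Gronwall argument then produces a uniform bound $\|\eta^{(n)}\|_{L^{\infty}_{T}H^{s}}\le M$ on some interval $[0,T]$ with $T=T(\|\eta_{0}\|_{H^{s}})$.

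To pass to the limit I would show $(\eta^{(n)})$ is Cauchy in $\cC([0,T];H^{s-1})$: the difference $\eta^{(n+1)}-\eta^{(n)}$ solves a linear transport equation driven by $(a(\eta^{(n)})-a(\eta^{(n-1)}))\partial_x\eta^{(n)}$ and $F(\eta^{(n)})-F(\eta^{(n-1)})$, both of which are controlled by $\|\eta^{(n)}-\eta^{(n-1)}\|_{H^{s-1}}$ times a polynomial in $M$; an $L^{2}$-energy estimate at regularity $s-1$ closes the contraction. The limit $\eta$ lies in $L^{\infty}([0,T];H^{s})\cap\mathrm{Lip}([0,T];H^{s-1})$, and a Bona--Smith regularization (smoothing $\eta_{0}$ to $J_{\varepsilon}\eta_{0}$ and passing to the limit) upgrades weak to strong continuity in $H^{s}$ and yields continuity of the flow map $\eta_{0}\mapsto\eta$. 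Uniqueness follows from the same $H^{s-1}$ energy estimate applied to the difference of two solutions.

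For the conservation of $H(\eta)$, I would pair \eqref{1.4} with $(1-\partial_x^{2})\eta=\eta-\eta_{xx}$ in $L^{2}$. The linear term $\partial_x\eta$ and the pure-polynomial-in-$\eta$ pieces of $F$ contribute zero since each is an exact derivative after pairing. Two non-trivial contributions remain and cancel: two integrations by parts give $\int(\eta-\eta_{xx})\tfrac{7}{2}\eta\eta_x\,dx=\tfrac{7}{4}\int\eta_x^{3}\,dx$, while by self-adjointness of $(1-\partial_x^{2})$ and a single integration by parts, $\int(\eta-\eta_{xx})(1-\partial_x^{2})^{-1}\partial_x\bigl(\tfrac{7}{4}\eta_x^{2}\bigr)dx=-\tfrac{7}{4}\int\eta_x^{3}\,dx$, so $\tfrac{d}{dt}H(\eta)=0$. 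I expect the main technical obstacle to be the uniform $H^{s}$ bound, specifically the interplay between the Kato--Ponce commutator estimate and the Moser bound for $F$ (chiefly its quadratic gradient piece), together with the Bona--Smith step required to promote the $H^{s-1}$ contraction to genuine continuity of the flow in $H^{s}$.
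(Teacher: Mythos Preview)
Your argument is correct. The paper, however, takes a somewhat different route at the construction stage: instead of a Picard iteration through linear transport equations with frozen coefficients, it regularizes \eqref{1.4} by inserting mollifiers $\mathcal{J}_\varepsilon$ everywhere, reduces the problem to an ODE in $H^{s}$ solved by the classical Picard theorem, and then obtains the same uniform $H^{s}$ energy estimate (via the Kato--Ponce commutator and the $H^{s-1}$-algebra property) directly on $\eta^{\varepsilon}$; passage to the limit is done by Aubin--Lions compactness rather than by a Cauchy estimate in $H^{s-1}$. Uniqueness and the Lipschitz dependence in $H^{s-1}$ are then established exactly along the lines you sketch, using the transport-equation lemma. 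Your iterative scheme plus Bona--Smith has the advantage of yielding strong continuity in $H^{s}$ and continuity of the flow map in a self-contained way, whereas the paper's compactness argument gives existence cleanly but treats the $H^{s}$-continuity of the flow map rather briefly. For the conservation of $H(\eta)$, the paper simply multiplies the undifferentiated form \eqref{CH-3} by $\eta$ and integrates by parts; your pairing of \eqref{1.4} with $(1-\partial_x^{2})\eta$ is the same computation viewed from the nonlocal side, and your identification of the cancelling $\tfrac{7}{4}\int\eta_x^{3}\,dx$ terms is correct.
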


\begin{thm}\label{thm-main-2}
 Let $\eta_{0}\in H^{s}$ be as in Theorem \ref{thm-main-1} with $s>\frac{3}{2}.$ Let $\eta$ be the corresponding solution to \eqref{1.4}. Assume $T_{\eta_{0}}^{\ast}>0$ is the maximal existence time. Then
\begin{equation}\label{CH-y1}
T_{\eta_{0}}^{\ast}<\infty\Rightarrow\int_{0}^{T_{\eta_{0}}^{\ast}}\|\partial_{x}\eta(\tau)\|_{L^{\infty}}d\tau=\infty.
\end{equation}
\end{thm}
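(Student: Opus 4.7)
The plan is to upgrade Theorem \ref{thm-main-1} with an a priori $H^{s}$ estimate of Beale--Kato--Majda type,
\begin{equation*}
\frac{d}{dt}\|\eta(t)\|_{H^s}^{2}\le C\bigl(1+\|\partial_x\eta(t)\|_{L^\infty}\bigr)\|\eta(t)\|_{H^s}^{2},
\end{equation*}
and then to argue by contradiction. Since $T^{\ast}_{\eta_{0}}$ is the maximal existence time furnished by Theorem \ref{thm-main-1}, it comes with the standard continuation criterion $\limsup_{t\to T^{\ast}_{\eta_{0}}}\|\eta(t)\|_{H^s}=\infty$ whenever $T^{\ast}_{\eta_{0}}<\infty$. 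If the integral in \eqref{CH-y1} were finite, Gronwall's inequality applied to the estimate above would give a uniform bound on $\|\eta(t)\|_{H^s}$ throughout $[0,T^{\ast}_{\eta_{0}})$, contradicting that criterion.

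To derive the differential inequality, rewrite \eqref{1.4} as $\partial_t\eta+c(\eta)\partial_x\eta=F(\eta)$ with $c(\eta)=-1-\tfrac{7}{2}\eta$ and $F(\eta)=(1-\partial_x^{2})^{-1}\partial_x G(\eta)$, where $G(\eta)=-2\eta-\tfrac{5}{2}\eta^{2}+\tfrac{7}{4}(\partial_x\eta)^{2}+\tfrac{1}{8}\eta^{3}-\tfrac{3}{64}\eta^{4}$. Apply $\Lam^{s}:=(1-\partial_x^{2})^{s/2}$, pair with $\Lam^{s}\eta$ in $L^{2}$, and split
\begin{equation*}
\bigl(\Lam^{s}(c\,\partial_x\eta),\Lam^{s}\eta\bigr)_{L^{2}}=\bigl(c\,\Lam^{s}\partial_x\eta,\Lam^{s}\eta\bigr)_{L^{2}}+\bigl([\Lam^{s},c]\partial_x\eta,\Lam^{s}\eta\bigr)_{L^{2}}.
\end{equation*}
The first summand reduces, after one integration by parts, to $\tfrac{1}{2}\bigl|\int\partial_xc\,(\Lam^{s}\eta)^{2}\,dx\bigr|\lesssim\|\partial_x\eta\|_{L^\infty}\|\eta\|_{H^s}^{2}$, while the Kato--Ponce commutator estimate
\begin{equation*}
\|[\Lam^{s},c]\partial_x\eta\|_{L^{2}}\lesssim\|\partial_xc\|_{L^\infty}\|\partial_x\eta\|_{H^{s-1}}+\|c\|_{H^s}\|\partial_x\eta\|_{L^\infty}
\end{equation*}
controls the second by the same quantity (the constant $-1$ in $c$ commutes with $\Lam^{s}$ and drops out).

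For the forcing, observe that $(1-\partial_x^{2})^{-1}\partial_x$ has symbol $i\xi/(1+\xi^{2})$, which satisfies $|i\xi/(1+\xi^{2})|\,\w{\xi}^{s}\le\w{\xi}^{s-1}$, so it maps $H^{s-1}$ into $H^{s}$ with norm at most one and $\|F(\eta)\|_{H^s}\le\|G(\eta)\|_{H^{s-1}}$. Since $s-1>1/2$, $H^{s-1}(\R)$ is a Banach algebra and Moser-type estimates give $\|\eta^{k}\|_{H^{s-1}}\lesssim\|\eta\|_{L^\infty}^{k-1}\|\eta\|_{H^s}$ together with $\|(\partial_x\eta)^{2}\|_{H^{s-1}}\lesssim\|\partial_x\eta\|_{L^\infty}\|\eta\|_{H^s}$. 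The conserved quantity $H(\eta)$ from Theorem \ref{thm-main-1} is essential here: it pins $\|\eta(t)\|_{H^{1}}^{2}=2H(\eta_{0})$ for all $t\in[0,T^{\ast}_{\eta_{0}})$, so the Sobolev embedding $H^{1}(\R)\hookrightarrow L^\infty(\R)$ yields a uniform bound $\|\eta(t)\|_{L^\infty}\le C_{0}$ depending only on $\eta_{0}$. Assembling these pieces gives $\|G(\eta)\|_{H^{s-1}}\lesssim(1+\|\partial_x\eta\|_{L^\infty})\|\eta\|_{H^s}$, closing the differential inequality. The only genuinely delicate point is the quadratic gradient term $(\partial_x\eta)^{2}$: unlike the monomials in $\eta$, which are tamed a priori by the conserved $L^\infty$ bound, it inherently forces the right-hand side to depend on $\|\partial_x\eta\|_{L^\infty}$, which is precisely the quantity appearing in the criterion \eqref{CH-y1}.
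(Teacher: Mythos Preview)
Your proof is correct and follows essentially the same route as the paper: apply $\Lambda^{s}$, pair with $\Lambda^{s}\eta$, handle the transport term via integration by parts plus the Kato--Ponce commutator estimate (the paper's Lemma~\ref{lem-Calculus inequalities}(ii)), bound the nonlocal forcing in $H^{s}$ by $\|G(\eta)\|_{H^{s-1}}$ using Moser product estimates, and then absorb all $\|\eta\|_{L^\infty}$ factors via the conserved $H^{1}$ norm before applying Gronwall and the standard continuation argument. The only cosmetic difference is that the paper first writes the differential inequality with the full factor $1+\|\eta\|_{L^\infty}+\|\partial_x\eta\|_{L^\infty}+\|\eta\|_{L^\infty}^{2}+\|\eta\|_{L^\infty}^{3}$ and invokes the conservation law afterwards, whereas you fold the $L^\infty$ bound on $\eta$ in immediately.
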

\medbreak \noindent{\bf Remark:} The blow-up criterion \eqref{CH-y1} implies that the lifespan $T_{\eta_{0}}^{\ast}$ does not depend on the regularity index $s$ of the initial data $\eta_{0}.$
\begin{thm}\label{thm-main-3}
 Let $\eta_{0}\in H^{s}(\mathbb{R})$ with $s>\frac{3}{2}$, $C_{0}=\frac{1}{2}+3\|\eta_{0}\|_{H^{1}}^{2}+\frac{3}{16}\|\eta_{0}\|_{H^{1}}^{3}+\frac{3}{32}\|\eta_{0}\|_{H^{1}}^{4}$. Assume that the initial value $\eta_{0}$ satisfies that $\eta_{0x}(x_{0})>\sqrt{\frac{2}{7}C_{0}}$ with the point $x_{0}$ defined by $\eta_{0x}(x_{0})=\inf\limits_{x\in\R}\eta_{0x}(x)$. Then the corresponding solution to the \eqref{1.4} blows up in finite time in the following sense: there exists a $T_{0}$ with $0<T_{0}\leq\displaystyle\frac{2}{7(1-\sigma)\eta_{0x}(x_{0})}$ such that
 \begin{equation}\label{CH-4}
  \begin{split}
\limsup_{t\rightarrow T_{0}}(\inf_{x\in\R}\eta_{x}(t,x))=+\infty,
 \end{split}
\end{equation}
where $\sigma\in(0, 1)$ such that $\sqrt{\sigma}\eta_{0x}(x_{0})=\sqrt{\frac{2}{7}C_{0}}.$
\end{thm}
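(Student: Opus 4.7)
The plan is to derive a Riccati-type differential inequality for $m(t):=\inf_{x\in\R}\eta_x(t,x)$ and then bootstrap it, using the assumption $m(0)=\eta_{0x}(x_0)>\sqrt{2C_0/7}>0$, into finite-time blow-up. Differentiating the first line of \eqref{1.4} in $x$ yields
$$\partial_t\eta_x-\partial_x^2\eta-\frac{7}{2}\eta_x^2-\frac{7}{2}\eta\,\eta_{xx} \;=\; P\partial_x^2\Big({-}2\eta-\frac{5}{2}\eta^2+\frac{7}{4}\eta_x^2+\frac{1}{8}\eta^3-\frac{3}{64}\eta^4\Big),$$
where $P:=(1-\partial_x^2)^{-1}$. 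A Constantin--Escher-type argument, justified since $s>\frac{3}{2}$ makes $\eta_x$ continuous and decaying at spatial infinity, shows that $m(\cdot)$ is locally Lipschitz on $[0,T^\ast_{\eta_0})$ and that, at points of differentiability, $\dot m(t)=\partial_t\eta_x(t,\xi(t))$ for $\xi(t)$ realizing the infimum. At such $\xi(t)$ both $\partial_x^2\eta$ and $\eta_{xx}$ vanish, so $\dot m(t)=\frac{7}{2}m(t)^2+\big(P\partial_x^2(\cdots)\big)(t,\xi(t))$.

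Use $P\partial_x^2=P-I$ to split the nonlocal source. The $-I$ part evaluated at $\xi(t)$ is $2\eta+\frac{5}{2}\eta^2-\frac{7}{4}m^2-\frac{1}{8}\eta^3+\frac{3}{64}\eta^4$, while the $P$ part still contains $P(\frac{7}{4}\eta_x^2)$. The crucial observation is that $m(0)>0$, and the ODE forces $\dot m\geq 0$ as long as $m>0$, so a bootstrap keeps $m(t)>0$; hence $\eta_x(t,\cdot)\geq m(t)>0$ everywhere, so $\eta_x^2\geq m^2$ pointwise, and convolving with the positive probability-density kernel $\frac{1}{2}e^{-|\cdot|}$ of $P$ gives $P(\frac{7}{4}\eta_x^2)(\xi(t))\geq\frac{7}{4}m^2$. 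Adding this lower bound to the $-\frac{7}{4}m^2$ from the $-I$ part cancels the loss and provides an extra $+\frac{7}{4}m^2$; the coefficient in front of $m^2$ is therefore promoted from $\frac{7}{4}$ to $\frac{7}{2}$:
$$\dot m \;\geq\; \frac{7}{2}m^2+\Psi(\eta)(\xi(t)),$$
where $\Psi(\eta)$ is a sum of polynomials in $\eta$ of degrees $1$ through $4$ together with their $P$-convolutions (no $\eta_x$ remaining).

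Next, bound $|\Psi(\eta)|\leq C_0$ uniformly in time. Theorem \ref{thm-main-1} gives the conservation $\|\eta(t)\|_{H^1}\equiv\|\eta_0\|_{H^1}$; combine this with the one-dimensional embedding $\|\eta\|_{L^\infty}^2\leq\frac{1}{2}\|\eta\|_{H^1}^2$, the mapping estimates $\|Pg\|_{L^\infty}\leq\|g\|_{L^\infty}$ and $\|Pg\|_{L^\infty}\leq\frac{1}{2}\|g\|_{L^1}$, and the Young inequality $2|\eta|\leq\frac{1}{2}+2\eta^2$, which converts the linear $\eta$ terms into the explicit constant $\frac{1}{2}$ in $C_0$ plus a quadratic contribution. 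Careful bookkeeping of the quadratic, cubic, and quartic pieces reproduces the $3\|\eta_0\|_{H^1}^2$, $\frac{3}{16}\|\eta_0\|_{H^1}^3$, and $\frac{3}{32}\|\eta_0\|_{H^1}^4$ coefficients in the statement.

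Finally, write $\frac{7}{2}m^2-C_0=\frac{7}{2}(1-\sigma)m^2+\big(\frac{7}{2}\sigma m^2-C_0\big)$; the defining identity $\sqrt{\sigma}\eta_{0x}(x_0)=\sqrt{2C_0/7}$ is equivalent to $\frac{7}{2}\sigma m(0)^2=C_0$, so the second bracket is nonnegative at $t=0$, and since $\dot m\geq 0$ makes $m$ nondecreasing, it stays nonnegative. Hence $\dot m\geq\frac{7}{2}(1-\sigma)m^2$ throughout the existence interval, and comparison with the explicit Riccati ODE $\dot y=\frac{7}{2}(1-\sigma)y^2$, $y(0)=\eta_{0x}(x_0)$, which blows up at time $T=\frac{2}{7(1-\sigma)\eta_{0x}(x_0)}$, combined with the blow-up criterion of Theorem \ref{thm-main-2}, yields \eqref{CH-4}. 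The main obstacle is precisely the cancellation that upgrades $\frac{7}{4}$ to $\frac{7}{2}$: it hinges on maintaining $m(t)>0$ throughout and on using the positive-kernel structure of $P$ to \emph{pointwise lower-bound} $P(\eta_x^2)$ by $m^2$, rather than merely $L^2$-controlling it by $\|\eta_x\|_{L^2}^2$. A secondary but standard issue is the Lipschitz regularity of $m(\cdot)$.
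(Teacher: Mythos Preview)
Your overall Riccati strategy matches the paper's: differentiate \eqref{1.4} in $x$, track $m(t)=\inf_x\eta_x(t,x)$ along a point $\xi(t)$ where $\eta_{xx}=0$, obtain $\dot m\ge \tfrac72 m^2-C_0$, and integrate. The difference lies entirely in how the nonlocal contribution of $\tfrac74\eta_x^2$ is treated. The paper writes the nonlocal right-hand side in the form $g_x*(\cdot)$, so that this piece becomes $\tfrac72\,g_x*(\eta_x\eta_{xx})$, and then simply \emph{omits it} when defining the remainder $f$ (compare the paper's list of terms in the differentiated equation with its definition of $f$). You instead keep it, use $P\partial_x^2=P-I$, and try to lower-bound $P(\tfrac74\eta_x^2)$ by $\tfrac74 m^2$ via positivity of the kernel together with the pointwise inequality $\eta_x^2\ge m^2$.

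That last step is where your argument breaks. The inequality $\eta_x(t,x)^2\ge m(t)^2$ for all $x$ requires $\eta_x(t,x)\ge m(t)>0$ everywhere, but for $\eta(t,\cdot)\in H^s(\R)$ with $s>\tfrac32$ one has $\eta_x(t,\cdot)\in H^{s-1}\hookrightarrow C_0(\R)$, hence $\eta_x(t,x)\to 0$ as $|x|\to\infty$ and $m(t)=\inf_x\eta_x(t,x)\le 0$ for every $t$. Your bootstrap ``$m(0)>0$ and $\dot m\ge 0$ keep $m(t)>0$'' therefore cannot start (and even formally, $\dot m\ge 0$ would need $m>\sqrt{2C_0/7}$, not merely $m>0$, since $\Psi$ may be as negative as $-C_0$). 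Without this trick the $P-I$ split only gives $P(\tfrac74\eta_x^2)\ge 0$, and the resulting inequality is $\dot m\ge\tfrac74 m^2-\widetilde C_0$; the coefficient $\tfrac72$ in the statement is then unreachable by your route. In short, you have correctly identified a term the paper glosses over, but your proposed remedy relies on a sign condition that $H^s(\R)$ data cannot satisfy.

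A secondary point: your $\Psi(\eta)$, built from $(P-I)$ applied to polynomials in $\eta$, consists of local values $\eta(\xi(t))^k$ together with $P(\eta^k)(\xi(t))$, whereas the paper's remainder is made of the \emph{different} objects $g_x*\eta_x$ and $g_x*(\eta^{k-1}\eta_x)$. The explicit constants $\tfrac12+3\|\eta_0\|_{H^1}^2+\tfrac{3}{16}\|\eta_0\|_{H^1}^3+\tfrac{3}{32}\|\eta_0\|_{H^1}^4$ come from bounding the latter via $\|g_x\|_{L^2}=\|g_x\|_{L^\infty}=\tfrac12$ and Young's inequality; your $(P-I)$ pieces do not produce the same numbers, so the claimed ``careful bookkeeping reproduces $C_0$'' is not substantiated.
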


This paper is organized as follows. In Section 2, we collect some elementary
facts and inequalities which will be used later. Section 3 is devoted to the local well-posedness of the free surface system
\eqref{1.4}. Finally, using the transport equation theory, we can give the wave-breaking phenomenon Theorem \ref{thm-main-3} in Section 4.

Let us complete this section with the notations we are going to use
in this context.

\medbreak \noindent{\bf Notations:} Let $A, B$ be two operators, we
denote $[A,B]=AB-BA,$ the commutator between $A$ and $B$. We shall
denote by $(a, b)$ (or $(a,  b)_{L^2}$) the $L^2(\R)$ inner product
of $a$ and $b$, and $\int\cdot dx\triangleq\int_{\mathbb{R}}\cdot dx$.

 We always denote the Fourier transform of a function $u$ by
$\hat{u}$ or $\cF(u)$. For $s \in \mathbb{R}$, we denote the pseudo-differential operator $\Lambda^s:=(1-\Delta)^{\frac{s}{2}}$ with the Fourier symbol $(1+|\xi|^2)^{\frac{s}{2}}$. Note that if $g(x)=\frac{1}{2}e^{-|x|},$ $x\in\R,$ then $(1-\partial_{x}^{2})^{-1}f=g\ast f$ for all $f\in L^{2}(\R),$ where $\ast$ denotes the spatial convolution. To simplify the notations, we shall use the letter $C$ to
denote a generic constant which may vary from line to line.

For $X$ a Banach space and $I$ an interval of $\R,$ we denote by
${\mathcal{C}}(I;\,X)$ the set of continuous functions on $I$ with
values in $X,$ for $q\in[1,+\infty],$ the
notation $L^q(I;\,X)$ stands for the set of measurable functions on
$I$ with values in $X,$ such that $t\longmapsto\|f(t)\|_{X}$ belongs
to $L^q(I).$


\renewcommand{\theequation}{\thesection.\arabic{equation}}
\setcounter{equation}{0} 

\section{Preliminaries}

In this section, we will give some elementary facts and useful lemmas which will
be used in the next section.

Let us first recall some basic facts about the regularizing operator called a mollifier,
see \cite{Majda2002} for more details. Given any radial function
\begin{eqnarray*}
\rho(|x|)\in \mathcal{C}_0^\infty(\mathbb{R}^N), \quad \rho\geq0, \quad\int_{\mathbb{R}^N}\rho dx=1,
\end{eqnarray*}
define the mollification $\mathcal{J}_\varepsilon u$ of $u\in L^p(\mathbb{R}^N)$, $1\leq p\leq \infty$,
by
\begin{eqnarray}\label{mollifier-1}
(\mathcal{J}_\varepsilon u)(x)=\varepsilon^{-N}\int_{\mathbb{R}^N}\rho(\displaystyle\frac{x-y}{\varepsilon})u(y) dy,
\quad \varepsilon>0.
\end{eqnarray}

Mollifiers have several well-known properties: \\
(i). $\mathcal{J}_\varepsilon u$ is a $\mathcal{C}^\infty$ function;\\
(ii). for all $u\in \mathcal{C}^0(\mathbb{R}^N)$, $\mathcal{J}_\varepsilon u\rightarrow u$ uniformly on any compact
set $\Omega$ in $\mathbb{R}^N$ and $\|\mathcal{J}_\varepsilon u\|_{L^\infty}\leq\|u\|_{L^\infty}$;\\
(iii). mollifiers commute with distribution derivatives, $D^{\alpha}\mathcal{J}_\varepsilon u=\mathcal{J}_\varepsilon D^\alpha u$;\\
(iv). for all $u\in H^m(\mathbb{R}^N)$, $\mathcal{J}_\varepsilon u$ converges to $u$ in $H^m$
and the rate of convergence in the $H^{m-1}$ norm is linear in $\varepsilon$:
$\lim_{\varepsilon\rightarrow0}\|\mathcal{J}_\varepsilon u-u\|_{H^m}=0, \, \|\mathcal{J}_\varepsilon u-u\|_{H^{m-1}}\leq C\varepsilon\|u\|_{H^m}$;\\
(v). for all $u\in H^m(\mathbb{R}^N)$, $k\in\mathbb{Z}^+\cup \{0\}$, and $\varepsilon>0$,
$
\|\mathcal{J}_\varepsilon u\|_{H^{m+k}}\leq\frac{C(m, k)}{\varepsilon^k}\|u\|_{H^m}, \, \|\mathcal{J}_\varepsilon u\|_{L^\infty}\leq\frac{C(k)}{\varepsilon^{\frac{N}{2}+k}}\|u\|_{L^2}.
$
\begin{lem}[Aubin-Lions's lemma, \cite{Simon1990}]\label{lem-Lions-Aubin's}
Assume $X\subset E\subset Y$ are Banach spaces and $X\hookrightarrow\hookrightarrow E$.
Then the following embeddings are compact:

(i)$\left\{\varphi:\varphi\in L^q([0, T]; X), \displaystyle\frac{\partial \varphi}{\partial t}\in L^1([0, T]; Y)\right\}\hookrightarrow\hookrightarrow
 L^q([0, T]; E)\quad if \quad 1\leq q\leq\infty$;

(ii)$\left\{\varphi:\varphi\in L^\infty([0, T]; X), \displaystyle\frac{\partial \varphi}{\partial t}\in L^r([0, T]; Y)\right\}\hookrightarrow\hookrightarrow
 {\mathcal{C}}([0, T]; E)\quad if \quad 1\leq r\leq\infty$.
\end{lem}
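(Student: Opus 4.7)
The plan is to deduce both compact embeddings from two classical ingredients: the Ehrling--Lions interpolation inequality and the Fr\'echet--Kolmogorov compactness criterion (together with Arzel\`a--Ascoli for the $\mathcal{C}$-valued part). The guiding idea is that the compact embedding $X\hookrightarrow\hookrightarrow E$ lets one trade an $X$-norm for a small $E$-error plus a large $Y$-norm, while control of the time derivative forces uniform equicontinuity in time in the weaker space $Y$. The first step is to prove Ehrling's inequality: for every $\eta>0$ there exists $C_\eta>0$ such that $\|u\|_E \le \eta\|u\|_X + C_\eta\|u\|_Y$ for all $u\in X$. The standard contradiction argument would supply $u_n$ with $\|u_n\|_E=1$, $\|u_n\|_X\le 1/\eta_0$, and $\|u_n\|_Y\to 0$; by $X\hookrightarrow\hookrightarrow E$ a subsequence converges strongly in $E$ to a nonzero limit that must vanish in $Y$ (since $E\hookrightarrow Y$), a contradiction.

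For part (i), let $\{\varphi_n\}$ be bounded in $L^q(0,T;X)$ with $\partial_t\varphi_n$ bounded in $L^1(0,T;Y)$. Ehrling applied pointwise in $t$ and integrated in $L^q$ yields
\[
\|\varphi_n-\varphi_m\|_{L^q(0,T;E)} \le \eta\,\|\varphi_n-\varphi_m\|_{L^q(0,T;X)} + C_\eta\,\|\varphi_n-\varphi_m\|_{L^q(0,T;Y)},
\]
so, since the first right-hand term is $O(\eta)$ uniformly in $n,m$, it suffices to extract a subsequence that is Cauchy in $L^q(0,T;Y)$. For that I would invoke the Fr\'echet--Kolmogorov criterion: the set is bounded in $L^q(0,T;Y)$ by $X\hookrightarrow Y$, and the translations are controlled by writing $\varphi_n(t+h)-\varphi_n(t) = \int_t^{t+h}\partial_s\varphi_n(s)\,ds$ in $Y$, which gives $\|\varphi_n(\cdot+h)-\varphi_n\|_{L^1(0,T-h;Y)} \le h\sup_n\|\partial_s\varphi_n\|_{L^1(0,T;Y)}$, uniformly in $n$; interpolating this uniform $L^1$-decay against the uniform $L^q$-bound upgrades it to $L^q(0,T-h;Y)$-decay. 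A diagonal extraction followed by $\eta\to 0$ then closes the argument.

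For part (ii), the bound $\varphi_n\in L^\infty(0,T;X)$ gives that $\{\varphi_n(t)\}$ is bounded in $X$ for every $t$, hence relatively compact in $E$, and H\"older on $\varphi_n(t)-\varphi_n(s)=\int_s^t\partial_\tau\varphi_n$ yields $\|\varphi_n(t)-\varphi_n(s)\|_Y \le |t-s|^{1-1/r}\|\partial_\tau\varphi_n\|_{L^r(0,T;Y)}$, producing uniform equicontinuity into $Y$. Ehrling upgrades this to uniform equicontinuity into $E$ (after modifying each $\varphi_n$ on a null set to be continuous into $E$), and Arzel\`a--Ascoli then delivers a subsequence converging in $\mathcal{C}([0,T];E)$.

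The main obstacle is the $L^1$-in-time derivative in part (i): pointwise-in-$t$ equicontinuity in $Y$ is not uniform in $n$ from just an $L^1$-bound, so one really needs the slicing estimate interpolated against the $L^q(X)$-bound to obtain uniform smallness of the translations in $L^q(Y)$. This is also the one place where the compactness, rather than just continuity, of $X\hookrightarrow E$ is indispensable: the Ehrling inequality (Step 1) is exactly where the hypothesis $X\hookrightarrow\hookrightarrow E$ is consumed, and the rest of the reasoning would work for any continuous triplet.
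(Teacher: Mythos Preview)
The paper does not supply a proof of this lemma; it is quoted as a preliminary result from Simon's paper and used as a black box in Step~3 of Section~3. So there is no in-paper argument to compare against, and I can only comment on the soundness of your outline.

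Your part~(i) has a genuine gap. You reduce, via Ehrling, to extracting a subsequence Cauchy in $L^q(0,T;Y)$ and then appeal to Fr\'echet--Kolmogorov. But for Banach-valued $L^q$ spaces the Kolmogorov--Riesz/Simon criterion requires, in addition to boundedness and uniform smallness of time-translations, that the time-averages $\int_{t_1}^{t_2}\varphi_n(t)\,dt$ (equivalently, the pointwise ranges) be relatively compact in the target space. Here those averages are merely bounded in $X$, and $X\hookrightarrow Y$ is only continuous, so relative compactness in $Y$ is not available; the constant-in-time sequence $\varphi_n(t)=e_n$ in $Y=\ell^2$ already shows that boundedness plus zero translation defect does not force $L^q(0,T;Y)$-compactness. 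The correct route is to apply Simon's criterion directly in $E$: the averages $\int_{t_1}^{t_2}\varphi_n$ are bounded in $X$ and hence relatively compact in $E$ (this is where $X\hookrightarrow\hookrightarrow E$ enters), while the translation estimate you derived in $Y$ upgrades to $E$ through Ehrling by first fixing $\eta$ small and then taking $h$ small.

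Your part~(ii) argument breaks at the endpoint $r=1$: the H\"older exponent $1-1/r$ vanishes, so a uniform $L^1$ bound on $\partial_t\varphi_n$ yields no equicontinuity into $Y$, and Arzel\`a--Ascoli cannot be invoked. In fact Simon's theorem gives the $\mathcal{C}([0,T];E)$ conclusion only for $r>1$; the statement as recorded in the paper with $1\le r\le\infty$ is not correct without an additional equi-integrability hypothesis on $\{\partial_t\varphi_n\}$ in $L^1(0,T;Y)$. For $r>1$ your Arzel\`a--Ascoli plus Ehrling argument is fine.
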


\begin{lem}[Calculus inequalities, \cite{Klainerman1981}]\label{lem-Calculus inequalities}
Let $s>0$. Then the following two estimates are true:

(i) $\|uv\|_{H^s(\R)}\leq C\|u\|_{H^s(\R)}\|v\|_{H^s(\R)} \quad \mbox{for all} \quad s>\frac{1}{2}$;\\

(ii) $\|[\Lambda^s, u]v\|_{L^2(\R)} \leq C(\|u\|_{H^s}\|v\|_{L^\infty(\R)}+\|\grad u\|_{L^\infty(\R)}\|v\|_{H^{s-1}(\R)})$,

\noindent where all the constants $C$s are independent of $u$ and $v$.
\end{lem}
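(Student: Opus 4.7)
Both inequalities are classical; my plan is to prove them by Fourier analysis, relying on the one-dimensional Peetre-type inequality
\[
\langle\xi\rangle^{s}\le C_s\bigl(\langle\xi-\eta\rangle^{s}+\langle\eta\rangle^{s}\bigr),\qquad \langle\xi\rangle:=(1+|\xi|^2)^{1/2},
\]
valid for every $s\ge 0$, together with the embedding $H^{s}(\R)\hookrightarrow L^{\infty}(\R)$ for $s>\tfrac12$, which in turn comes from $\|\hat u\|_{L^{1}}\le \|\langle\cdot\rangle^{s}\hat u\|_{L^{2}}\,\|\langle\cdot\rangle^{-s}\|_{L^{2}}\le C\|u\|_{H^{s}}$.

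\textbf{Proof of (i).} Writing $\widehat{uv}=\hat u*\hat v$ and applying the Peetre inequality inside the convolution, one gets
\[
\langle\xi\rangle^{s}|\widehat{uv}(\xi)|\le C\int\!\bigl(\langle\xi-\eta\rangle^{s}|\hat u(\xi-\eta)|\bigr)|\hat v(\eta)|\,d\eta+C\int|\hat u(\xi-\eta)|\bigl(\langle\eta\rangle^{s}|\hat v(\eta)|\bigr)\,d\eta.
\]
Taking $L^{2}_{\xi}$-norms and applying Young's inequality $\|f*g\|_{L^{2}}\le\|f\|_{L^{1}}\|g\|_{L^{2}}$ to each term, one ends up bounding $\|uv\|_{H^{s}}$ by a sum of $\|\hat u\|_{L^{1}}\|v\|_{H^{s}}$ and $\|\hat v\|_{L^{1}}\|u\|_{H^{s}}$. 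The $L^{1}$ norms are controlled by $H^{s}$ norms for $s>\tfrac12$, as noted above, yielding (i).

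\textbf{Proof of (ii).} Take the Fourier transform of $[\Lambda^{s},u]v$ and get
\[
\cF\bigl([\Lambda^{s},u]v\bigr)(\xi)=\int\bigl(\langle\xi\rangle^{s}-\langle\eta\rangle^{s}\bigr)\hat u(\xi-\eta)\,\hat v(\eta)\,d\eta.
\]
Using the elementary inequality
\[
\bigl|\langle\xi\rangle^{s}-\langle\eta\rangle^{s}\bigr|\le C\,|\xi-\eta|\bigl(\langle\xi-\eta\rangle^{s-1}+\langle\eta\rangle^{s-1}\bigr),
\]
(proved by splitting into the regimes $|\xi-\eta|\le\tfrac12\langle\eta\rangle$, where one writes $\langle\xi\rangle^{s}-\langle\eta\rangle^{s}$ as a radial integral of the gradient of $\langle\cdot\rangle^{s}$, and $|\xi-\eta|>\tfrac12\langle\eta\rangle$, where both sides are controlled directly) the integrand splits into two pieces. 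The piece carrying $\langle\xi-\eta\rangle^{s-1}$ reconstructs, after taking $L^{2}_{\xi}$ norms and applying Plancherel plus Young, a convolution of $|\xi|\,\langle\xi\rangle^{s-1}\hat u$ with $\hat v$; estimating the first factor in $L^{2}$ by $\|\nabla u\|_{H^{s-1}}\lesssim\|u\|_{H^{s}}$ and the second factor in $L^{1}$ by $\|v\|_{L^{\infty}}$ (via Fourier inversion, once we truncate appropriately to low/high frequency) produces the $\|u\|_{H^{s}}\|v\|_{L^{\infty}}$ term. The piece carrying $\langle\eta\rangle^{s-1}$ similarly yields the $\|\nabla u\|_{L^{\infty}}\|v\|_{H^{s-1}}$ term.

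\textbf{Main obstacle.} The delicate point is (ii): naively applying Cauchy--Schwarz to each piece forces both factors into $L^{2}$, which is not enough to capture the $L^{\infty}$ norms in the statement. To fix this one has to perform a Littlewood--Paley / Bony paraproduct decomposition $uv=T_{u}v+T_{v}u+R(u,v)$ and analyze $[\Lambda^{s},T_{u}]v$, $[\Lambda^{s},T_{v}]u$, $[\Lambda^{s},R]$ separately, using that $\Lambda^{s}$ commutes (up to lower-order terms controlled by $\nabla u$) with spectral localizations. This is precisely the Kato--Ponce argument, and it is the step I would treat most carefully; since it is standard, I would either adopt the Kato--Ponce proof verbatim or invoke Lemma 2.100 of Bahouri--Chemin--Danchin.
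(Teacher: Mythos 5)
The paper does not prove this lemma at all: it is quoted verbatim as a known result with a citation to Klainerman--Majda (part (ii) is the Kato--Ponce commutator estimate, part (i) the algebra property of $H^{s}$ for $s>\tfrac12$). So there is no in-paper argument to compare against; what can be judged is whether your proposal would stand on its own. Part (i) does: the Peetre inequality $\langle\xi\rangle^{s}\le C_s(\langle\xi-\eta\rangle^{s}+\langle\eta\rangle^{s})$, Young's inequality $\|f*g\|_{L^{2}}\le\|f\|_{L^{1}}\|g\|_{L^{2}}$, and the bound $\|\hat u\|_{L^{1}}\le C\|u\|_{H^{s}}$ for $s>\tfrac12$ give a complete and correct proof of the algebra property in one dimension.

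Part (ii) is where your proposal is only an outline, and you have correctly diagnosed why. The Fourier-side computation and the mean-value bound $|\langle\xi\rangle^{s}-\langle\eta\rangle^{s}|\le C|\xi-\eta|(\langle\xi-\eta\rangle^{s-1}+\langle\eta\rangle^{s-1})$ are fine, but the step where you ``estimate the second factor in $L^{1}$ by $\|v\|_{L^{\infty}}$ via Fourier inversion'' does not work as stated: $\|\hat v\|_{L^{1}}$ is the Wiener-algebra norm and is \emph{not} controlled by $\|v\|_{L^{\infty}}$ (and symmetrically $\|\widehat{\nabla u}\|_{L^{1}}$ is not controlled by $\|\nabla u\|_{L^{\infty}}$), so the naive convolution estimate only yields the weaker bound with $\|\hat v\|_{L^{1}}$ and $\|\widehat{\nabla u}\|_{L^{1}}$ in place of the $L^{\infty}$ norms. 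Recovering the genuine $L^{\infty}$ norms is exactly the content of the Kato--Ponce argument, which requires the Littlewood--Paley/paraproduct decomposition (or the original Kato--Ponce interpolation scheme) that you name but do not carry out. Since the paper itself only cites the result, deferring to Kato--Ponce or to Lemma 2.100 of Bahouri--Chemin--Danchin is an acceptable resolution and matches the paper's level of rigor; but as a self-contained proof your part (ii) is incomplete, and you should either import that argument explicitly or present the lemma as a citation, as the paper does.
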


\begin{lem}[1-D Moser-type estimates, \cite{Chemin2004}]\label{lem-1-D Moser-type estimates}
The following estimates holds:\\
(i)For $s\geq0$,
$$\|fg\|_{H^s(\R)}\leq C\{\|f\|_{H^s(\R)}\|g\|_{L^\infty(\R)}+\|g\|_{H^s(\R)}\|f\|_{L^\infty(\R)}\}.$$
(ii))For $s_{1}\leq\frac{1}{2},s_{2}>\frac{1}{2}$ and  $s_{1}+s_{2}>0$,
$$\|fg\|_{H^{s_{1}}(\R)}\leq C\|f\|_{H^{s_{1}}(\R)}\|g\|_{H^{s_{2}}(\R)}.$$
\end{lem}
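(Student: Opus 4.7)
The plan is to prove both bounds via Bony's paraproduct decomposition
\[
fg = T_f g + T_g f + R(f,g),
\]
where $T_f g = \sum_j S_{j-1} f\,\Delta_j g$ is the paraproduct and $R(f,g) = \sum_{|j-k|\le 1}\Delta_j f\,\Delta_k g$ is the remainder, combined with the Littlewood--Paley characterization $\|u\|_{H^s}^2 \sim \sum_{j\ge -1}2^{2js}\|\Delta_j u\|_{L^2}^2$, where $\Delta_j$ and $S_j$ are the standard inhomogeneous Littlewood--Paley projectors on $\R$.

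For (i), the two paraproducts are handled by the classical estimate: since $S_{j-1}f\,\Delta_j g$ is Fourier-localized in an annulus of size $2^j$, one has $\|S_{j-1}f\,\Delta_j g\|_{L^2}\le \|f\|_{L^\infty}\|\Delta_j g\|_{L^2}$, and taking $\ell^2$ with weight $2^{js}$ yields $\|T_f g\|_{H^s}\le C\|f\|_{L^\infty}\|g\|_{H^s}$, and symmetrically $\|T_g f\|_{H^s}\le C\|g\|_{L^\infty}\|f\|_{H^s}$. For the remainder, I would use that $\Delta_j f\,\Delta_k g$ with $|j-k|\le 1$ is Fourier-supported in a ball of radius $\sim 2^j$, so $\Delta_\ell R(f,g)$ only receives contributions from $j\ge \ell-N_0$ for some fixed integer $N_0$; after bounding $\|\Delta_j f\,\Delta_k g\|_{L^2}\le \|f\|_{L^\infty}\|\Delta_k g\|_{L^2}$ (or the symmetric counterpart), multiplying by $2^{\ell s}$ and applying a discrete Young inequality, the hypothesis $s\ge 0$ makes the tail sum $\sum_{j\ge \ell}2^{(\ell-j)s}$ finite and closes the estimate.

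For (ii), the key extra input is the 1-D Sobolev embedding $H^{s_2}(\R)\hookrightarrow L^\infty(\R)$ valid since $s_2>\tfrac12$, which gives $\|g\|_{L^\infty}\le C\|g\|_{H^{s_2}}$ and hence $\|T_g f\|_{H^{s_1}}\le C\|g\|_{L^\infty}\|f\|_{H^{s_1}}\le C\|f\|_{H^{s_1}}\|g\|_{H^{s_2}}$. For $T_f g$, I would use instead the 1-D Bernstein-type bound $\|S_{j-1}f\|_{L^\infty}\le C\,2^{j(1/2-s_1)}\|f\|_{H^{s_1}}$ (valid for $s_1<\tfrac12$ by summing $\|\Delta_k f\|_{L^\infty}\le C2^{k/2}\|\Delta_k f\|_{L^2}$ via Cauchy--Schwarz, with a small modification at the endpoint), which yields $2^{j s_1}\|S_{j-1}f\,\Delta_j g\|_{L^2}\le C\|f\|_{H^{s_1}}\cdot 2^{j/2}\|\Delta_j g\|_{L^2}$, and the condition $s_2>\tfrac12$ makes this $\ell^2$-summable by Cauchy--Schwarz against $\|g\|_{H^{s_2}}$. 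The remainder is then estimated exactly as in (i) via the ball-support argument, with $s_1+s_2>0$ now playing the role that $s\ge 0$ did before.

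The principal obstacle is the remainder term: unlike the paraproducts, each of its summands is Fourier-supported in a ball rather than an annulus, so the cheap almost-orthogonality of Littlewood--Paley blocks is unavailable and one must exploit the positive sign of the summed regularity ($s\ge 0$ in (i), $s_1+s_2>0$ in (ii)) to close a discrete convolution inequality. The endpoint $s_1=\tfrac12$ in (ii) is also mildly delicate and is handled by replacing the direct Bernstein bound on $S_{j-1}f$ with a careful $\ell^2$-splitting of the low-frequency sum.
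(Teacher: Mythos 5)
The paper offers no proof of this lemma---it is quoted directly from \cite{Chemin2004}---and your Bony-decomposition argument ($fg=T_fg+T_gf+R(f,g)$ plus Littlewood--Paley characterization of $H^s$, Sobolev embedding $H^{s_2}\hookrightarrow L^\infty$ for the paraproduct $T_gf$, and the ball-support/discrete-Young argument for the remainder) is precisely the standard proof given in that reference, so there is nothing to compare against and your outline is essentially correct. The one point worth a sentence in a written-out version: at the endpoint $s=0$ of part (i) the remainder sum $\sum_{j\ge\ell}2^{(\ell-j)s}$ you invoke actually diverges, so that case must be dispatched separately (trivially, by H\"older's inequality $\|fg\|_{L^2}\le\|f\|_{L^\infty}\|g\|_{L^2}$), just as you already flag and correctly handle the delicate endpoint $s_1=\tfrac12$ in part (ii).
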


To study the wave-breaking criterion of the system \eqref{1.4}, we need the following lemma
on the transport equation (especially taking the space dimension $d = 1$).
\begin{lem}[Transport equation theory, \cite{Chemin2001,Gui-Liu2010}]\label{lem-Transport equation theory}
Suppose that $s>-\frac{d}{2}$. Let $\upsilon$ be a vector field such that $\nabla\upsilon$ belongs to $L^{1}([0,T];H^{s-1})$ if $s>1+\frac{d}{2}$ or to $L^{1}([0,T];H^{\frac{d}{2}}\bigcap L^{\infty})$otherwise. Suppose also that $f_{0}\in H^{s},F\in L^{1}([0,T];H^{s})$ and that $f\in L^{\infty}([0,T];H^{s})\bigcap \mathcal{C}([0,T];S^{'})$solves the d-dimensional linear transport equations
\begin{equation}\label{lem-2.4}
    \begin{cases}
       \partial_{t}f+\upsilon \cdot \nabla f= F,\, \forall \, t>0, \, x \in \R^d,\\
       f|_{t=0}=f_{0}.
    \end{cases}
    \end{equation}
\quad Then $f\in \mathcal{C}([0,T];H^{s})$. More precisely, there exists a constant C depending only on $s,p$ and $d$, and such that the following statements hold:

(i)If $s\neq1+\frac{d}{2}$,
$$\|f\|_{H^s}\leq \|f_{0}\|_{H^s}+\int_{0}^{t}\|F(\tau)\|_{H^s}d\tau+c\int_{0}^{t}V^{'}\|F(\tau)\|_{H^s}d\tau,$$
or hence
$$\|f\|_{H^s}\leq e^{CV(t)}(\|f_{0}\|_{H^s}+\int_{0}^{t}e^{-CV(t)}\|F(\tau)\|_{H^s}d\tau),$$
with $V(t)=\int_{0}^{t}\|\nabla\upsilon(\tau)\|_{H^{\frac{d}{2}}\bigcap L^{\infty}}d\tau$ if $s<1+\frac{d}{2}$ and $V(t)=\int_{0}^{t}\|\nabla\upsilon(\tau)\|_{H^{s-1}}d\tau$ else.

(ii))If $f=\upsilon$, then for all $s>0,$ the estimates both above hold with $V(t)=\int_{0}^{t}\|\nabla\upsilon(\tau)\|_{L^{\infty}}d\tau.$
\end{lem}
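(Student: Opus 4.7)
The plan is to prove the lemma by a Littlewood--Paley dyadic decomposition combined with a commutator estimate, following the standard paradifferential approach due to Chemin. The architecture has four layers: frequency-localize the equation, run an $L^2$ energy estimate on each dyadic block, estimate the commutator between $\Delta_q$ and the transport operator in terms of the regularity of $v$, and finally sum in $q$ with weights $2^{qs}$ and close the loop via Gronwall.

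First I would apply the nonhomogeneous Littlewood--Paley block $\Delta_q$ (for $q \geq -1$) to \eqref{lem-2.4}, setting $f_q = \Delta_q f$, $F_q = \Delta_q F$, and $R_q = [v \cdot \nabla,\Delta_q] f$. This yields the localized equation
$$\partial_t f_q + v \cdot \nabla f_q = F_q - R_q.$$
Multiplying by $f_q$, integrating in $x$, and using $\int v \cdot \nabla(f_q^2/2)\,dx = -\tfrac{1}{2}\int (\dive v)f_q^2\,dx$ gives the per-block bound
$$\frac{d}{dt}\|f_q\|_{L^2} \leq \tfrac{1}{2}\|\dive v\|_{L^\infty}\|f_q\|_{L^2} + \|F_q\|_{L^2} + \|R_q\|_{L^2}.$$

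The hard part, and the only place where the dichotomy in the statement really enters, is the commutator estimate of the form
$$2^{qs}\|R_q(\tau)\|_{L^2} \leq C\,c_q(\tau)\,V'(\tau)\|f(\tau)\|_{H^s}, \qquad \|(c_q(\tau))_q\|_{\ell^2_q} \leq 1.$$
I would prove it via Bony's decomposition $vg = T_v g + T_g v + R(v,g)$ applied to $v \cdot \nabla f$, together with the standard frequency-interaction lemmas: paraproducts contribute ``easy'' terms controlled by $\|\nabla v\|_{L^\infty}\|f\|_{H^s}$, while the reverse paraproduct $T_{\nabla f}v$ and the remainder $R$ are the delicate ones. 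When $s < 1 + d/2$ the low-frequency part of $\nabla v$ must be controlled in $L^\infty$ and its high frequencies in $H^{d/2}$, forcing $V'(\tau) = \|\nabla v(\tau)\|_{H^{d/2}\cap L^\infty}$; when $s > 1 + d/2$, Bernstein's inequality lets us absorb everything into $\|\nabla v\|_{H^{s-1}}$. The critical value $s = 1 + d/2$ is excluded precisely because of the logarithmic loss at the endpoint where $H^{d/2}$ fails to embed in $L^\infty$.

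Finally, multiplying the per-block inequality by $2^{qs}$, taking the $\ell^2$ norm in $q$ (using Minkowski to commute $\ell^2_q$ with the time integral), and invoking the commutator bound yields the integral inequality
$$\|f(t)\|_{H^s} \leq \|f_0\|_{H^s} + \int_0^t \|F(\tau)\|_{H^s}\,d\tau + C\int_0^t V'(\tau)\|f(\tau)\|_{H^s}\,d\tau,$$
from which Gronwall's lemma produces the exponentiated form in (i). The continuity $f \in \mathcal{C}([0,T];H^s)$ is then obtained by a standard regularization--compactness argument: mollify $v,f_0,F$, apply the smooth theory to get a classical solution, use the uniform estimate just derived to pass to the limit, and appeal to Aubin--Lions (Lemma \ref{lem-Lions-Aubin's}) for the time continuity. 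For part (ii), with $f = v$, the worst term $T_{\nabla v}v$ in the commutator decomposition can be closed using only $\|\nabla v\|_{L^\infty}$ because both factors already carry the $H^s$ regularity being estimated, so the dichotomy collapses and $V(t) = \int_0^t \|\nabla v(\tau)\|_{L^\infty}\,d\tau$ suffices for every $s > 0$.
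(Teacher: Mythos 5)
The paper offers no proof of this lemma at all --- it is imported verbatim from the cited references (Chemin's Camassa--Holm paper and Gui--Liu), and your Littlewood--Paley block decomposition, per-block $L^2$ energy estimate, Bony commutator bound with the $s\lessgtr 1+\frac d2$ dichotomy, and Gronwall closure is exactly the argument those sources give. The only point I would tighten is the time-continuity step: on $\R^d$ the embedding $H^{s}\hookrightarrow H^{s'}$ is not compact, so Aubin--Lions does not apply directly; the standard route is to use the uniform-in-time smallness of the high-frequency tails $\sum_{q\geq N}2^{2qs}\|\Delta_q f(t)\|_{L^2}^2$ furnished by your $\ell^2$ summation, combined with the continuity of each individual block.
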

We also need the following lemma about the boundness of the operator $(1-\partial_x^2)^{-1}$.
\begin{lem}[see \cite{Chemin2004}]\label{2-5}
Let $m\in\R$ and $f$ be an $S^{m}$-multiplier(that is, $f:\R^{d}\rightarrow \R$ is smooth and satisfies that for all multi-index $\alpha,$ there exists a constant $C_{\alpha}$ such that $\forall\xi\in\R^{d}, |\partial^{\alpha}f(\xi)|\leq C_{\alpha}(1+|\xi|)^{m-|\alpha|}).$ Then for all $s\in\R$ and $1\leq p,r\leq\infty,$ the operator $f(D)$ is continuous from $H^{s}$ to  $H^{s-m}$, that is $\|f(D)(u)\|_{H^{s-m}}\leq C\|(u)\|_{H^{s}}$.
\end{lem}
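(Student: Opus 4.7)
The plan is to reduce the continuity assertion to a pointwise bound on the Fourier symbol, using Plancherel's theorem. This works directly because the Sobolev spaces in the statement are the $L^{2}$-based ones: $\|u\|_{H^{s}}^{2}=\int_{\R^{d}}(1+|\xi|^{2})^{s}|\hat u(\xi)|^{2}\,d\xi$, and $f(D)$ is defined by $\widehat{f(D)u}(\xi)=f(\xi)\,\hat u(\xi)$. So everything reduces to comparing the weight $(1+|\xi|^{2})^{s-m}|f(\xi)|^{2}$ with $(1+|\xi|^{2})^{s}$ pointwise in $\xi$.

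More concretely, I would first expand, by Plancherel,
\begin{equation*}
\|f(D)u\|_{H^{s-m}}^{2}=\int_{\R^{d}}(1+|\xi|^{2})^{s-m}|f(\xi)|^{2}|\hat u(\xi)|^{2}\,d\xi.
\end{equation*}
Then I would invoke only the $\alpha=0$ instance of the $S^{m}$-multiplier assumption, which gives $|f(\xi)|\le C_{0}(1+|\xi|)^{m}$. Combined with the elementary two-sided comparison $c(1+|\xi|^{2})^{1/2}\le 1+|\xi|\le \sqrt{2}(1+|\xi|^{2})^{1/2}$, this yields $|f(\xi)|^{2}\le C(1+|\xi|^{2})^{m}$ uniformly in $\xi$. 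Substituting,
\begin{equation*}
\|f(D)u\|_{H^{s-m}}^{2}\le C\int_{\R^{d}}(1+|\xi|^{2})^{s-m}(1+|\xi|^{2})^{m}|\hat u(\xi)|^{2}\,d\xi=C\|u\|_{H^{s}}^{2},
\end{equation*}
which is the desired bound. One starts with $u$ in the Schwartz class so that all the integrals make sense, and then extends by density to all of $H^{s}$.

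There is essentially no obstacle here: the proof uses only the zeroth-order symbol estimate, not the full multi-index family of bounds in the $S^{m}$ definition. The higher derivative bounds $|\partial^{\alpha}f(\xi)|\lesssim(1+|\xi|)^{m-|\alpha|}$ become necessary only if one wants to promote the mapping property to the $L^{p}$-based Besov or Triebel–Lizorkin scales (via Mikhlin–Hörmander type theorems using Littlewood–Paley decomposition), which is presumably why the parameters $p,r\in[1,\infty]$ appear in the statement even though the written estimate itself is stated only on the $L^{2}$-Sobolev scale. For the $H^{s}\to H^{s-m}$ version used in the sequel, Plancherel alone suffices and the argument above is complete.
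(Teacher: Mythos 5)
The paper does not prove this lemma at all --- it is quoted verbatim from Chemin's lecture notes (\cite{Chemin2004}) and used as a black box --- so there is no in-paper argument to compare against. Your Plancherel proof is correct and complete for the statement as written: since $H^{\sigma}$ here is the $L^{2}$-based Sobolev space, the identity $\|f(D)u\|_{H^{s-m}}^{2}=\int(1+|\xi|^{2})^{s-m}|f(\xi)|^{2}|\hat u(\xi)|^{2}\,d\xi$ together with the $\alpha=0$ symbol bound and the two-sided comparison of $1+|\xi|$ with $(1+|\xi|^{2})^{1/2}$ (which, as you implicitly use, handles both signs of $m$) gives the estimate, and density of the Schwartz class finishes it. Your side remarks are also accurate: the higher-order derivative bounds in the $S^{m}$ condition and the parameters $p,r$ are vestiges of the Besov-space version $B^{s}_{p,r}\to B^{s-m}_{p,r}$ in the cited source, where one genuinely needs Bernstein/Littlewood--Paley arguments; for the $H^{s}=B^{s}_{2,2}$ case actually invoked in this paper (e.g.\ in \eqref{CH-23}--\eqref{CH-27}), the zeroth-order bound suffices. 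No gaps.
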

\renewcommand{\theequation}{\thesection.\arabic{equation}}
\setcounter{equation}{0}
\section{Local well-posedness}

This section is devoted to the proof of the local well-posedness of the system \eqref{1.4}:

\begin{proof}[Proof of Theorem \ref{thm-main-1}]
The proof is based on the energy method. We divide it into four steps.
{\bf Step 1: Construction of smooth approximate solution}

We introduce the following approximate system of \eqref{1.4}
\begin{equation}\label{CH-7}
    \begin{cases}
       \partial_{t}\eta^{\varepsilon}-\mathcal{J_\varepsilon}\partial_{x}\eta^{\varepsilon}-\frac{7}{2}\mathcal{J_\varepsilon}\eta^{\varepsilon}\mathcal{J_\varepsilon}\partial_{x}\eta^{\varepsilon} \!\!\!\!
       &=\mathcal{J_\varepsilon}(1-\partial_{x}^{2})^{-1}\partial_{x}[-2\mathcal{J_\varepsilon}\eta^{\varepsilon}-\frac{5}{2}\mathcal{J_\varepsilon}(\eta^{\varepsilon})^{2}\\
       &\quad+\frac{7}{4}\mathcal{J_\varepsilon}(\partial_{x}\eta^{\varepsilon})^{2}+\frac{1}{8}\mathcal{J_\varepsilon}(\eta^{\varepsilon})^{3}-\frac{3}{64}\mathcal{J_\varepsilon}(\eta^{\varepsilon})^{4}],\\
       \eta^{\varepsilon}|_{t=0}=\eta^{\varepsilon}_{0},
    \end{cases}
    \end{equation}
where $\mathcal{J_\varepsilon}$ denotes mollifier operator. The regularized equation \eqref{CH-7} reduces to an ordinary differential system:
\begin{equation}\label{CH-8}
    \begin{cases}
       \partial_{t}\eta^{\varepsilon}=\partial_{x}\eta^{\varepsilon}+\frac{7}{2}\mathcal{J_\varepsilon}[\mathcal{J_\varepsilon}\eta^{\varepsilon}\mathcal{J_\varepsilon}\partial_{x}\eta^{\varepsilon}]+
       \mathcal{J_\varepsilon}(1-\partial_{x}^{2})^{-1}\partial_{x}[-2\mathcal{J_\varepsilon}\eta^{\varepsilon}-\frac{5}{2}\mathcal{J_\varepsilon}(\eta^{\varepsilon})^{2}\\
       \quad\quad+\frac{7}{4}\mathcal{J_\varepsilon}(\partial_{x}\eta^{\varepsilon})^{2}+\frac{1}{8}\mathcal{J_\varepsilon}(\eta^{\varepsilon})^{3}-\frac{3}{64}\mathcal{J_\varepsilon}(\eta^{\varepsilon})^{4}],\\
       \eta^{\varepsilon}|_{t=0}=\eta^{\varepsilon}_{0}.
    \end{cases}
    \end{equation}

The classical Picard Theorem ensures that the \eqref{CH-7} has a unique smooth solution
$\eta^{\varepsilon} \in \mathcal{C}([0,T_{\varepsilon}]; H^s(\mathbb{R}))$
for some $ T_{\varepsilon}>0$.

{\bf Step 2: Uniform estimates to the approximate solutions}

Applying the operator $\Lambda^s$ to the system \eqref{CH-7} and then taking the $L^2$ inner product, we get
\begin{equation}\label{CH-9}
  \begin{split}
\frac{1}{2}\frac{d}{dt}\|\Lambda^s \eta^{\varepsilon}\|_{L^2}^2
&=\int_{\R} \Lambda^s \mathcal{J_\varepsilon}\partial_{x}\eta^{\varepsilon}\cdot \Lambda^s \mathcal{J_\varepsilon}\eta^{\varepsilon}dx+\int_{\R} \Lambda^s [\mathcal{J_\varepsilon}\partial_{x}\eta^{\varepsilon}(\frac{7}{2}\mathcal{J_\varepsilon}\eta^{\varepsilon})]\cdot \Lambda^s \mathcal{J_\varepsilon}\eta^{\varepsilon}dx\\
&\quad+\int_{\R} \mathcal{J_\varepsilon}\Lambda^{s-2} \partial_{x}[-2\mathcal{J_\varepsilon}\eta^{\varepsilon}-\frac{5}{2}\mathcal{J_\varepsilon}(\eta^{\varepsilon})^{2}
+\frac{7}{4}\mathcal{J_\varepsilon}(\partial_{x}\eta^{\varepsilon})^{2}\\
&\quad+\frac{1}{8}\mathcal{J_\varepsilon}(\eta^{\varepsilon})^{3}-\frac{3}{64}\mathcal{J_\varepsilon}(\eta^{\varepsilon})^{4}]\cdot\Lambda^s \mathcal{J_\varepsilon}\eta^{\varepsilon}dx=:I_1+I_2+I_3,
 \end{split}
\end{equation}
and
$$I_1=\int_{\R} \Lambda^s \mathcal{J_\varepsilon}\partial_{x}\eta^{\varepsilon}\cdot \Lambda^s \mathcal{J_\varepsilon}\eta^{\varepsilon} dx=\frac{1}{2}\int_{\R}\partial_{x}(\Lambda^s\mathcal{J_\varepsilon}\eta^{\varepsilon})^{2}dx=0,$$
we may get from a standard commutator's process that
\begin{equation*}\label{CH-10}
  \begin{split}
 I_2&=\int_{\R}(\frac{7}{2}\mathcal{J_\varepsilon}\eta^{\varepsilon})\partial_{x}\Lambda^s \mathcal{J_\varepsilon}\eta^{\varepsilon}\cdot\Lambda^s \mathcal{J_\varepsilon}\eta^{\varepsilon}dx+\int_{\R}[\Lambda^s,(\frac{7}{2}\mathcal{J_\varepsilon}\eta^{\varepsilon})]\mathcal{J_\varepsilon}\partial_{x}\eta^{\varepsilon}\cdot\Lambda^s\mathcal{J_\varepsilon}\eta^{\varepsilon}dx\\
&=:I_{2.1}+I_{2.2}.
 \end{split}
\end{equation*}
For $I_{2.1}$ we get by integration by parts that
\begin{equation}\label{CH-11}
  \begin{split}
 |I_{2.1}|&\leq \frac{1}{2}\|\partial_{x}(\frac{7}{2}\mathcal{J_\varepsilon}\eta^{\varepsilon})\|_{L^{\infty}}\|\Lambda^s \mathcal{J_\varepsilon}\eta^{\varepsilon}\|_{L^2}^2\leq C\|\partial_{x}\mathcal{J_\varepsilon}\eta^{\varepsilon}\|_{L^{\infty}}\|\Lambda^s\mathcal{J_\varepsilon}\eta^{\varepsilon}\|_{L^2}^2\\
&\leq C\|\partial_{x}\mathcal{J_\varepsilon}\eta^{\varepsilon}\|_{L^{\infty}}\|\mathcal{J_\varepsilon}\eta^{\varepsilon}\|^{2}_{H^{s}},
 \end{split}
 \end{equation}
thanks to H\"{o}lder's inequality and commutator estimate, we infer that
\begin{equation}\label{CH-12}
  \begin{split}
|I_{2.2}|&\leq \|[\Lambda^s,(\frac{7}{2}\mathcal{J_\varepsilon}\eta^{\varepsilon})]\mathcal{J_\varepsilon}\partial_{x}\eta^{\varepsilon}\|_{L^2}\|\Lambda^s \mathcal{J_\varepsilon}\eta^{\varepsilon}\|_{L^2}\\
& \leq C\|\mathcal{J_\varepsilon}\eta^{\varepsilon}\|_{H^{s}}(\|\mathcal{J_\varepsilon}\eta^{\varepsilon}\|_{H^{s}}\|\mathcal{J_\varepsilon}\partial_{x}\eta^{\varepsilon}\|_{L^{\infty}}+\|\mathcal{J_\varepsilon}\partial_{x}\eta^{\varepsilon}\|_{L^{\infty}}\|\mathcal{J_\varepsilon}\partial_{x}\eta^{\varepsilon}\|_{H^{s-1}})\\
& \leq C\|\mathcal{J_\varepsilon}\eta^{\varepsilon}\|_{H^{s}}(\|\mathcal{J_\varepsilon}\eta^{\varepsilon}\|_{H^{s}}\|\mathcal{J_\varepsilon}\partial_{x}\eta^{\varepsilon}\|_{L^{\infty}}+\|\mathcal{J_\varepsilon}\partial_{x}\eta^{\varepsilon}\|_{L^{\infty}}\|\mathcal{J_\varepsilon}\eta^{\varepsilon}\|_{H^{s}})\\
& \leq C\|\partial_{x}\mathcal{J_\varepsilon}\eta^{\varepsilon}\|_{L^{\infty}}\|\mathcal{J_\varepsilon}\eta^{\varepsilon}\|^{2}_{H^{s}}.
 \end{split}
 \end{equation}
Substituting \eqref{CH-11} and \eqref{CH-12} into $I_{2}$ leads to
\begin{equation}\label{CH-13}
  \begin{split}
& |I_2|\leq C\|\partial_{x}\mathcal{J_\varepsilon}\eta^{\varepsilon}\|_{L^{\infty}}\|\mathcal{J_\varepsilon}\eta^{\varepsilon}\|^{2}_{H^{s}}.
 \end{split}
\end{equation}

Thanks to the Sobolev embedding theorem $H^{s}\hookrightarrow L^{\infty}(for s>\frac{1}{2})$, we know that
 $$\|\partial_{x}\mathcal{J_\varepsilon}\eta^{\varepsilon}\|_{L^{\infty}}\leq C\|\partial_{x}\mathcal{J_\varepsilon}\eta^{\varepsilon}\|_{H^{s-1}}\leq C\|\mathcal{J_\varepsilon}\eta^{\varepsilon}\|_{H^{s}},$$
where $s>\frac{3}{2}$.

Which along with \eqref{CH-13} implies that
\begin{equation}\label{CH-14}
  \begin{split}
& |I_2|\leq C\|\mathcal{J_\varepsilon}\eta^{\varepsilon}\|^{3}_{H^{s}},
 \end{split}
\end{equation}
because $H^{s-1}(s-1>\frac{1}{2})$ is a Banach algebra, we get from the Sobolev embedding inequality that
\begin{equation}
  \begin{split}
|I_3| &\leq \|\mathcal{J_\varepsilon}\Lambda^{s-2} \partial_{x}[-2\mathcal{J_\varepsilon}\eta^{\varepsilon}-\frac{5}{2}\mathcal{J_\varepsilon}(\eta^{\varepsilon})^{2}
+\frac{7}{4}\mathcal{J_\varepsilon}(\partial_{x}\eta^{\varepsilon})^{2}+\frac{1}{8}\mathcal{J_\varepsilon}(\eta^{\varepsilon})^{3}-\frac{3}{64}\mathcal{J_\varepsilon}(\eta^{\varepsilon})^{4}]\|_{L^2}\|\Lambda^s \mathcal{J_\varepsilon}\eta^{\varepsilon}\|_{L^2}\\
&\leq C\|\partial_{x}[-2\mathcal{J_\varepsilon}\eta^{\varepsilon}-\frac{5}{2}\mathcal{J_\varepsilon}(\eta^{\varepsilon})^{2}
+\frac{7}{4}\mathcal{J_\varepsilon}(\partial_{x}\eta^{\varepsilon})^{2}+\frac{1}{8}\mathcal{J_\varepsilon}(\eta^{\varepsilon})^{3}-\frac{3}{64}\mathcal{J_\varepsilon}(\eta^{\varepsilon})^{4}]\|_{H^{s-2}}\|\mathcal{J_\varepsilon}\eta^{\varepsilon}\|_{H^{s}}\\
&\leq C\|-2\mathcal{J_\varepsilon}\eta^{\varepsilon}-\frac{5}{2}\mathcal{J_\varepsilon}(\eta^{\varepsilon})^{2}
+\frac{7}{4}\mathcal{J_\varepsilon}(\partial_{x}\eta^{\varepsilon})^{2}+\frac{1}{8}\mathcal{J_\varepsilon}(\eta^{\varepsilon})^{3}-\frac{3}{64}\mathcal{J_\varepsilon}(\eta^{\varepsilon})^{4}\|_{H^{s-1}}\|\mathcal{J_\varepsilon}\eta^{\varepsilon}\|_{H^{s}},
 \end{split}
\end{equation}
hence,
\begin{equation}\label{CH-15}
  \begin{split}
|I_3| &\leq C[\|\mathcal{J_\varepsilon}\eta^{\varepsilon}\|_{H^{s-1}}+\|\mathcal{J_\varepsilon}(\eta^{\varepsilon})^{2}\|_{H^{s-1}}+\|\mathcal{J_\varepsilon}(\partial_{x}\eta^{\varepsilon})^{2}\|_{H^{s-1}}\\
&\quad+\|\mathcal{J_\varepsilon}(\eta^{\varepsilon})^{3}\|_{H^{s-1}}+\|\mathcal{J_\varepsilon}(\eta^{\varepsilon})^{4}\|_{H^{s-1}}]\|\mathcal{J_\varepsilon}\eta^{\varepsilon}\|_{H^{s}}\\
& \leq C\|\mathcal{J_\varepsilon}\eta^{\varepsilon}\|^{2}_{H^{s}}(1+2\mathcal{J_\varepsilon}\|\eta^{\varepsilon}\|_{H^{s}}+\|\mathcal{J_\varepsilon}\eta^{\varepsilon}\|^{2}_{H^{s}}+\|\mathcal{J_\varepsilon}\eta^{\varepsilon}\|^{3}_{H^{s}}).
 \end{split}
\end{equation}
Substituting \eqref{CH-14} and \eqref{CH-15} into \eqref{CH-9} leads to
\begin{equation}\label{CH-16}
  \begin{split}
\frac{d}{dt}\|\eta^{\varepsilon}\|^{2}_{H^{s}}&\leq C\|\mathcal{J_\varepsilon}\eta^{\varepsilon}\|^{2}_{H^{s}}(1+2\|\mathcal{J_\varepsilon}\eta^{\varepsilon}\|_{H^{s}}+\|\mathcal{J_\varepsilon}\eta^{\varepsilon}\|^{2}_{H^{s}}+\|\mathcal{J_\varepsilon}\eta^{\varepsilon}\|^{3}_{H^{s}})+C\|\mathcal{J_\varepsilon}\eta^{\varepsilon}\|^{3}_{H^{s}}\\
& \leq C\|\mathcal{J_\varepsilon}\eta^{\varepsilon}\|^{2}_{H^{s}}(1+3\|\mathcal{J_\varepsilon}\eta^{\varepsilon}\|_{H^{s}}+\|\mathcal{J_\varepsilon}\eta^{\varepsilon}\|^{2}_{H^{s}}+\|\mathcal{J_\varepsilon}\eta^{\varepsilon}\|^{3}_{H^{s}}).
 \end{split}
\end{equation}

Therefore, by the bootstrap argument, we may get that, there is a positive time $T$ ($\leq T_{\varepsilon}$) independent of $\varepsilon$ such that for all $\varepsilon>0$,
\begin{equation}\label{CH-17}
  \begin{split}
\sup_{0\leq t\leq T}\|\eta^{\varepsilon}(x,t)\|_{H^s}\leq C\|\eta_{0}\|_{H^s},
 \end{split}
\end{equation}
which along with \eqref{CH-16} implies that
\begin{equation}\label{CH-18}
\{\eta^{\varepsilon}(x,t)\}_{n\in N} \quad \mbox{is uniformly bounded in}  \quad {\mathcal{C}}([0, T]; H^s(\mathbb{R})).
\end{equation}
Furthermore, there holds
\begin{equation}\label{CH-19}
\{\partial_{t}\eta^{\varepsilon}(x,t)\}_{n\in N} \quad \mbox{is uniformly bounded in}  \quad {\mathcal{C}}([0, T]; H^{s-1}(\mathbb{R})).
\end{equation}

{\bf Step 3: Convergence}

With \eqref{CH-17},\eqref{CH-18}, and \eqref{CH-19}, the Aubin-Lions's compactness lemma ensures that there exist a subsequence of $\{\eta^{\varepsilon}(x,t)\}_{\varepsilon>0}$ converges to some limit $\eta(x,t)$ on $[0, T]$
which solves \eqref{1.4}, moreover, there holds
\begin{equation}\label{CH-20}
  \eta(x,t) \in {\mathcal{C}}([0,T];H^s(\mathbb{R}))\cap{\mathcal{C}}^{1}([0, T]; H^{s-1}(\mathbb{R})).
\end{equation}

{\bf Step 4: Uniqueness of the solution}

Let $\eta^{1}$ and $\eta^{2}$ be two solutions of \eqref{CH-9} with the same initial data and satisfy \eqref{CH-18}. We
denote $\eta^{1, 2}:=\eta^1-\eta^2$, then $\eta^{1, 2}$ satisfies
\begin{equation*}
\begin{cases}
    \begin{matrix}
   \partial_{t}\eta^{1, 2}  = \qquad & \!\!\!\!\!\!\!\!\!\!\!\!\!\!\!\!\!\!\!\!\!\!\!
 \partial_{x}\eta^{1, 2}+\frac{7}{2}\eta^{1}\partial_{x}\eta^{1, 2}+\frac{7}{2}\eta^{1,2}\partial_{x}\eta^{ 2}+(1-\partial_{x}^{2})^{-1}\partial_{x}\big(-2\eta^{1,2} & \\
&\!\!\! -\frac{5}{2}\eta^{1,2}(\eta^{2}+\eta^{1})
  +\frac{7}{4}\partial_{x}\eta^{1,2}(\partial_{x}\eta^{1} +\partial_{x}\eta^{2})+\frac{1}{8}\eta^{1,2}((\eta^{1})^{2} &
  \\
&  +\eta^{1}\eta^{2}+(\eta^{2})^{2})
 -\frac{3}{64}\eta^{1,2}(\eta^{2}+\eta^{1})((\eta^{1})^{2}+(\eta^{2})^{2})\big)  & \quad
\forall \, t > 0,\, x \in \R,\\
  \eta^{1, 2}|_{t=0}=0   & & \quad \forall \, x \in \R.
 \qquad \; \,
 \end{matrix}
    \end{cases}
    \end{equation*}
Thanks to transport equation theory, we have
\begin{equation}\label{CH-21}
  \begin{split}
&e^{-C\int_{0}^{t}\|\partial_{x}\eta^{1}(\tau)\|_{H^{s-1}}d\tau}\|\eta^{1,2}(t)\|_{H^{s-1}}\\
&\quad\leq\|\eta_{0}^{1,2}\|_{H^{s-1}}+C\int_{0}^{t}e^{-C\int_{0}^{t}\|\partial_{x}\eta^{1}(\tau^{'})\|_{H^{s-1}}d\tau^{'}}
\times(\|\frac{7}{2}\eta^{1,2}\partial_{x}\eta^{2}\|_{H^{s-1}}\\
&\quad+\|(1-\partial_{x}^{2})^{-1}\partial_{x}\{-2\eta^{1,2}-\frac{5}{2}\eta^{1,2}(\eta^{2}+\eta^{1})
+\frac{7}{4}\partial_{x}\eta^{1,2}(\partial_{x}\eta^{1}+\partial_{x}\eta^{2})\\
&\quad+\frac{1}{8}\eta^{1,2}((\eta^{1})^{2}+\eta^{1}\eta^{2}+(\eta^{2})^{2})-\frac{3}{64}\eta^{1,2}(\eta^{2}+\eta^{1})((\eta^{1})^{2}+(\eta^{2})^{2})\}\|_{H^{s-1}})d\tau.
 \end{split}
\end{equation}
For $s>1+\frac{1}{2}, H^{s-1}$ is an algebra, so we know that
\begin{equation}\label{CH-22}
  \begin{split}
\|\frac{7}{2}\eta^{1,2}\partial_{x}\eta^{2}\|_{H^{s-1}}
\leq C\|\eta^{1,2}\|_{H^{s-1}}\|\partial_{x}\eta^{2}\|_{H^{s-1}}
\leq C\|\eta^{1,2}\|_{H^{s-1}}\|\eta^{2}\|_{H^{s}}.
 \end{split}
\end{equation}
On the other hand, from Lemma \ref{lem-1-D Moser-type estimates} and Lemma \ref{2-5}, we get
\begin{equation}\label{CH-23}
  \begin{split}
&\|(1-\partial_{x}^{2})^{-1}\partial_{x}(-2\eta^{1,2}-\frac{5}{2}\eta^{1,2}(\eta^{2}+\eta^{1}))\|_{H^{s-1}}\\
&\leq C
\|\eta^{1,2}\|_{H^{s-1}}(1+\|\eta^{2}\|_{H^{s-1}}+\|\eta^{1}\|_{H^{s-1}})\leq C\|\eta^{1,2}\|_{H^{s-1}}(1+\|\eta^{2}\|_{H^{s}}+\|\eta^{1}\|_{H^{s}}),
 \end{split}
\end{equation}
and
\begin{equation}\label{CH-24}
  \begin{split}
&\|(1-\partial_{x}^{2})^{-1}\partial_{x}(\frac{7}{4}\partial_{x}\eta^{1,2}(\partial_{x}\eta^{1}+\partial_{x}\eta^{2}))\|_{H^{s-1}}\\
&\leq C\|\partial_{x}\eta^{1,2}\|_{H^{s-2}}(\|\partial_{x}\eta^{1}\|_{H^{s-1}}+\|\partial_{x}\eta^{2}\|_{H^{s-1}})\leq C\|\eta^{1,2}\|_{H^{s-1}}(\|\eta^{1}\|_{H^{s}}+\|\eta^{2}\|_{H^{s}}).
 \end{split}
\end{equation}
\quad Similarly, we have
\begin{equation}\label{CH-25}
  \begin{split}
&\|(1-\partial_{x}^{2})^{-1}\partial_{x}(\frac{1}{8}\eta^{1,2}((\eta^{1})^{2}+\eta^{1}\eta^{2}+(\eta^{2})^{2}))\|_{H^{s-1}}\\
&\quad\leq C\|\eta^{1,2}\|_{H^{s-1}}(\|\eta^{1}\|_{H^{s-1}}^{2}+\|\eta^{1}\|_{H^{s-1}}\|\eta^{2}\|_{H^{s-1}}
+\|\eta^{2}\|_{H^{s-1}}^{2}))\\
&\quad\leq C\|\eta^{1,2}\|_{H^{s-1}}(\|\eta^{1}\|_{H^{s}}^{2}+\|\eta^{1}\|_{H^{s}}\|\eta^{2}\|_{H^{s}}
+\|\eta^{2}\|_{H^{s}}^{2}),
 \end{split}
\end{equation}
and
\begin{equation}\label{CH-26}
  \begin{split}
&\|(1-\partial_{x}^{2})^{-1}\partial_{x}(-\frac{3}{64}\eta^{1,2}(\eta^{2}+\eta^{1})((\eta^{1})^{2}+(\eta^{2})^{2}))\|_{H^{s-1}}\\
&\quad\leq C\|\eta^{1,2}\|_{H^{s-1}}(\|\eta^{1}\|_{H^{s-1}}+\|\eta^{2}\|_{H^{s-1}})
(\|\eta^{1}\|_{H^{s-1}}^{2}+\|\eta^{2}\|_{H^{s-1}}^{2})\\
&\quad\leq C\|\eta^{1,2}\|_{H^{s-1}}(\|\eta^{1}\|_{H^{s}}+\|\eta^{2}\|_{H^{s}})
(\|\eta^{1}\|_{H^{s}}^{2}+\|\eta^{2}\|_{H^{s}}^{2}).
 \end{split}
\end{equation}
Similarly, we get
\begin{equation}\label{CH-27}
  \begin{split}
&\|(1-\partial_{x}^{2})^{-1}\partial_{x}\{-2\eta^{1,2}-\frac{5}{2}\eta^{1,2}(\eta^{2}+\eta^{1})-\frac{7}{4}\partial_{x}\eta^{1,2}(\partial_{x}\eta^{1}
+\partial_{x}\eta^{2})\\
&\quad+\frac{1}{8}\eta^{1,2}((\eta^{1})^{2}+\eta^{1}\eta^{2}+(\eta^{2})^{2})-\frac{3}{64}\eta^{1,2}(\eta^{2}+\eta^{1})((\eta^{1})^{2}+(\eta^{2})^{2})\}\|_{H^{s-1}}\\
&\leq C\|\eta^{1,2}\|_{H^{s-1}}(1+\|\eta^{1}\|_{H^{s}}+\|\eta^{2}\|_{H^{s}}+\|\eta^{1}\|_{H^{s}}^{2}+\|\eta^{2}\|_{H^{s}}^{2}+\|\eta^{1}\|_{H^{s}}\|\eta^{2}\|_{H^{s}}\\
&\quad+(\|\eta^{1}\|_{H^{s}}+\|\eta^{2}\|_{H^{s}})
(\|\eta^{1}\|_{H^{s}}^{2}+\|\eta^{2}\|_{H^{s}}^{2}))\\
&\leq C\|\eta^{1,2}\|_{H^{s-1}}(1+\|\eta^{1}\|_{H^{s}}^{3}+\|\eta^{2}\|_{H^{s}}^{3}).
 \end{split}
\end{equation}
Therefore, from \eqref{CH-22} to \eqref{CH-27},  with Young's inequality,  we obtain
\begin{equation}\label{CH-28}
  \begin{split}
&e^{-C\int_{0}^{t}\|\partial_{x}\eta^{1}(\tau)\|_{H^{s-1}}d\tau}\|\eta^{1,2}(t)\|_{H^{s-1}}\\
&\leq \|\eta_{0}^{1,2}\|_{H^{s-1}}+C\int_{0}^{t}e^{-C\int_{0}^{t}\|\partial_{x}\eta^{1}(\tau^{'})\|_{H^{s-1}}d\tau^{'}}\|\eta^{1,2}(t)\|_{H^{s-1}}(1+\|\eta^{1}\|_{H^{s}}^{3}+\|\eta^{2}\|_{H^{s}}^{3})d\tau.
 \end{split}
\end{equation}
Hence, applying the Gronwall's inequality, we reach
\begin{equation}\label{CH-29}
  \begin{split}
\|\eta^{1}(t)-\eta^{2}(t)\|_{H^{s-1}}\leq\|\eta_{0}^{1,2}\|_{H^{s-1}} e^{-C\int_{0}^{t}1+\|\eta^{1}\|_{H^{s}}^{3}+\|\eta^{2}\|_{H^{s}}^{3}d\tau}.
 \end{split}
\end{equation}
With $\|\eta_{0}^{1,2}\|_{H^{s-1}}=0$, we get that $\|\eta^{1}(t)-\eta^{2}(t)\|_{H^{s-1}}\equiv0$, which implies that $\eta^{1}\equiv\eta^{2},\forall x\in \R, \,t \in[0,T].$

Based on the argument of the proof of uniqueness, we may readily get the map $\eta_{0}\mapsto \eta$ is continuous from a neighborhood of $\eta_{0}$ in $H^{s}$ into $\eta(x,t) \in {\mathcal{C}}([0,T];H^s)\bigcap{\mathcal{C}}^{1}([0, T]; H^{s-1}).$

Therefore, from Step 1 to Step 4, we complete the proof of Theorem \ref{thm-main-1}.
\end{proof}

\renewcommand{\theequation}{\thesection.\arabic{equation}}
\setcounter{equation}{0}

\section{Weave-breaking criteria}

In this section, attention is turned to investigating conditions of wave breaking.

With Theorem \ref{thm-main-1} in hand, we are now ready to complete the proof of the wave-breaking.
The proof of Theorem \ref{thm-main-2} strongly depends on Lemma \ref{lem-Transport equation theory} on the localization analysis for the transport equation.

\begin{proof}[Proof of Theorem \ref{thm-main-2}]

 Applying the operator $\Lambda^s$ to the first equation of system \eqref{1.4} and then taking the $L^2$ inner product, we get
\begin{equation}\label{CH-30}
  \begin{split}
&\frac{1}{2}\frac{d}{dt}\|\Lambda^s \eta\|_{L^2}^2
=\int_{\R} \Lambda^s \partial_{x}\eta\cdot \Lambda^s \eta dx+\int_{\R} \Lambda^s \partial_{x}\eta(\frac{7}{2}\eta)\cdot \Lambda^s \eta dx\\
&\quad+\int_{\R} \Lambda^{s-2} \partial_{x}[-2\eta-\frac{5}{2}\eta^{2}
+\frac{7}{4}(\partial_{x}\eta)^{2}+\frac{1}{8}\eta^{3}-\frac{3}{64}\eta^{4}]\cdot\Lambda^s \eta dx=:I_1+I_2+I_3,
 \end{split}
\end{equation}
and
$$I_1=\int_{\R} \Lambda^s \partial_{x}\eta\cdot \Lambda^s \eta dx=\frac{1}{2}\int_{\R}\partial_{x}(\Lambda^s\eta)^{2}dx=0,$$
we may get from a standard commutator's process that
\begin{equation*}\label{CH-31}
  \begin{split}
I_2&=\int_{\R}(\frac{7}{2}\eta)\partial_{x}\Lambda^s \eta\cdot\Lambda^s \eta dx+\int_{\R}[\Lambda^s,(\frac{7}{2}\eta)]\partial_{x}\eta\cdot\Lambda^s\eta dx=:I_{2.1}+I_{2.2}.
 \end{split}
\end{equation*}
For $I_{2.1}$ we get by integration by parts that
\begin{equation}\label{CH-32}
  \begin{split}
  |I_{2.1}|&\leq \frac{1}{2}\|\partial_{x}(\frac{7}{2}\eta)\|_{L^{\infty}}\|\Lambda^s \eta\|_{L^2}^2\leq C\|\partial_{x}\eta\|_{L^{\infty}}\|\Lambda^s\eta\|_{L^2}^2\leq C\|\partial_{x}\eta\|_{L^{\infty}}\|\eta\|^{2}_{H^{s}},
 \end{split}
 \end{equation}
thanks to H\"{o}lder's inequality and commutator estimate, we infer that
\begin{equation}\label{CH-33}
  \begin{split}
|I_{2.2}|&\leq \|[\Lambda^s,(\frac{7}{2}\eta)]\partial_{x}\eta\|_{L^2}\|\Lambda^s \eta\|_{L^2} \leq C\|\eta\|_{H^{s}}(\|\eta\|_{H^{s}}\|\partial_{x}\eta\|_{L^{\infty}}+\|\partial_{x}\eta\|_{L^{\infty}}\|\partial_{x}\eta\|_{H^{s-1}})\\
& \leq C\|\eta\|_{H^{s}}(\|\eta\|_{H^{s}}\|\partial_{x}\eta\|_{L^{\infty}}+\|\partial_{x}\eta\|_{L^{\infty}}\|\eta\|_{H^{s}}) \leq C\|\partial_{x}\eta\|_{L^{\infty}}\|\eta\|^{2}_{H^{s}}.
 \end{split}
 \end{equation}
Substituting \eqref{CH-32} and \eqref{CH-33} into $I_{2}$ leads to
\begin{equation}\label{CH-34}
  \begin{split}
& |I_2|\leq C\|\partial_{x}\eta\|_{L^{\infty}}\|\eta\|^{2}_{H^{s}}.
 \end{split}
\end{equation}
Because $H^{s-1}(s-1>\frac{1}{2})$ is a Banach algebra, we get from the Sobolev embedding inequality that
\begin{equation}\label{CH-35}
  \begin{split}
\quad\quad\quad|I_3|& \leq \|\Lambda^{s-2} \partial_{x}[-2\eta-\frac{5}{2}\eta^{2}
+\frac{7}{4}(\partial_{x}\eta)^{2}+\frac{1}{8}\eta^{3}-\frac{3}{64}\eta^{4}]\|_{L^2}\|\Lambda^s \eta\|_{L^2}\\
& \leq C\|\partial_{x}[-2\eta-\frac{5}{2}\eta^{2}
+\frac{7}{4}(\partial_{x}\eta)^{2}+\frac{1}{8}\eta^{3}-\frac{3}{64}\eta^{4}]\|_{H^{s-2}}\|\eta\|_{H^{s}}\\
& \leq C\|-2\eta-\frac{5}{2}\eta^{2}
+\frac{7}{4}(\partial_{x}\eta)^{2}+\frac{1}{8}\eta^{3}-\frac{3}{64}\eta^{4}\|_{H^{s-1}}\|\eta\|_{H^{s}}.
 \end{split}
\end{equation}

Hence,
\begin{equation}\label{CH-35}
  \begin{split}
|I_3|& \leq C[\|\eta\|_{H^{s-1}}+\|\eta^{2}\|_{H^{s-1}}+\|(\partial_{x}\eta)^{2}\|_{H^{s-1}}+\|\eta^{3}\|_{H^{s-1}}+\|\eta^{4}\|_{H^{s-1}}]\|\eta\|_{H^{s}}\\
& \leq C[\|\eta\|^{2}_{H^{s}}+\|\eta\|_{L^{\infty}}\|\eta\|^{2}_{H^{s}}+\|\partial_{x}\eta\|_{L^{\infty}}\|\eta\|^{2}_{H^{s}}+\|\eta\|^{2}_{L^{\infty}}\|\eta\|^{2}_{H^{s}}+\|\eta\|^{3}_{L^{\infty}}\|\eta\|^{2}_{H^{s}}]\\
& \leq C\|\eta\|^{2}_{H^{s}}(1+\|\eta\|_{L^{\infty}}+\|\partial_{x}\eta\|_{L^{\infty}}+\|\eta\|^{2}_{L^{\infty}}+\|\eta\|^{3}_{L^{\infty}}),
 \end{split}
\end{equation}
substituting \eqref{CH-34} and \eqref{CH-35} into \eqref{CH-30} leads to
\begin{equation}\label{CH-36}
  \begin{split}
&\frac{d}{dt}\|\eta\|^{2}_{H^{s}}\leq C\|\eta\|^{2}_{H^{s}}(1+\|\eta\|_{L^{\infty}}+\|\partial_{x}\eta\|_{L^{\infty}}+\|\eta\|^{2}_{L^{\infty}}+\|\eta\|^{3}_{L^{\infty}}).
 \end{split}
\end{equation}
\quad Thanks to Gronwall's inequality, one can see
$$\|\eta(t)\|^{2}_{H^{s}}\leq\|\eta_{0}\|^{2}_{H^{s}}e^{C\int_{0}^{t}(1+\|\eta\|_{L^{\infty}}+\|\partial_{x}\eta\|_{L^{\infty}}+\|\eta\|^{2}_{L^{\infty}}+\|\eta\|^{3}_{L^{\infty}})d\tau},$$
using the Sobolev embedding theorem $H^{s}\hookrightarrow L^{\infty}$(for $s>\frac{1}{2})$, we get from Theorem \ref{thm-main-1} that
$$\|\eta(t)\|_{L^{\infty}}\leq C\|\eta_{0}\|_{H^{1}},$$
which implies that
$$\|\eta(t)\|^{2}_{H^{s}}\leq\|\eta_{0}\|^{2}_{H^{s}}e^{C\int_{0}^{t}(1+C_{1}+\|\partial_{x}\eta\|_{L^{\infty}}d\tau)},$$
where $C_{1}=C_{1}(\|\eta_{0}\|_{H^{1}}).$

Therefore, if the maximal existence time
$T_{\eta_{0}}^{\ast}<\infty$ satisfies $\int_{0}^{T_{\eta_{0}}^{\ast}}\|\partial_{x}\eta(\tau)\|_{L^{\infty}}d\tau<\infty,$
then it implies that
$$\limsup_{t\rightarrow T_{\eta_{0}}^{\ast}}(\|\eta(t)\|_{H^{s}})<\infty,$$
contradicts the assumption on the maximal existence time $T_{\eta_{0}}^{\ast}<\infty$. Hence, the proof of Theorem \ref{thm-main-2} is complete.
\end{proof}
\begin{lem}\label{lem-4.2}
Let $\eta_0\in H^{s}(\R)$ with $s>\frac{3}{2},$ and let $T>0$ be the maximal existence time of the solution $\eta$ to the \eqref{1.4} with initial data $\eta_{0}.$ Then the corresponding solution blows up in finite time if and only if
\begin{equation}\label{CH-p}
  \begin{split}
\lim_{t\rightarrow T^{-}}\sup_{x\in\R}\eta_{x}(t,x)=+\infty.
 \end{split}
\end{equation}
\end{lem}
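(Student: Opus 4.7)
The plan is to complement the blow-up criterion of Theorem~\ref{thm-main-2} (phrased as an $L^\infty$ integrability condition on $\eta_x$) by a one-sided a priori lower bound on $\eta_x$, obtained from a Riccati-type equation along the characteristics of the transport field $v=-1-\tfrac{7}{2}\eta$. Since this bound keeps $\inf_x\eta_x$ uniformly bounded on $[0,T)$, the $L^\infty$ blow-up of $\eta_x$ forced by Theorem~\ref{thm-main-2} can only come from $\sup_x\eta_x\to+\infty$.

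To implement this, I rewrite the first equation of \eqref{1.4} as $(\partial_t+v\partial_x)\eta=(1-\partial_x^2)^{-1}\partial_x Q$, with $Q=-2\eta-\tfrac{5}{2}\eta^2+\tfrac{7}{4}\eta_x^2+\tfrac{1}{8}\eta^3-\tfrac{3}{64}\eta^4$, differentiate in $x$, and use the identity $\partial_x^2(1-\partial_x^2)^{-1}=(1-\partial_x^2)^{-1}-I$ to obtain
\begin{equation*}
(\partial_t+v\partial_x)\eta_x\;=\;\tfrac{7}{4}\eta_x^2 + G,\qquad G=2\eta+\tfrac{5}{2}\eta^2-\tfrac{1}{8}\eta^3+\tfrac{3}{64}\eta^4+(1-\partial_x^2)^{-1}Q.
\end{equation*}
Conservation of the $H^1$-energy in Theorem~\ref{thm-main-1} gives $\|\eta(t)\|_{H^1}=\|\eta_0\|_{H^1}$; combined with $\|\eta\|_{L^\infty}\le\tfrac{1}{\sqrt2}\|\eta\|_{H^1}$ and the convolution estimate $\|(1-\partial_x^2)^{-1}f\|_{L^\infty}\le\tfrac{1}{2}\|f\|_{L^1}$ (applied termwise in $Q$, the only nontrivial contribution being $\|\eta_x^2\|_{L^1}=\|\eta_x\|_{L^2}^2$), this yields $\|G(t)\|_{L^\infty}\le C_0$ for a constant $C_0$ depending only on $\|\eta_0\|_{H^1}$. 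Setting $h(t)=\eta_x(t,X(t;x_0))$ along the flow $X$ of $v$, one gets $\dot h\ge\tfrac{7}{4}h^2-C_0$; the region $h\le-\sqrt{4C_0/7}$ is a trap ($\dot h\ge 0$ there), so
\begin{equation*}
\inf_{x\in\R}\eta_x(t,x)\;\ge\;-K,\qquad K:=\max\!\bigl(\|\eta_{0x}\|_{L^\infty},\sqrt{4C_0/7}\bigr),\qquad\forall\,t\in[0,T).
\end{equation*}

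With this in hand, set $M(t):=\sup_{x\in\R}\eta_x(t,x)$. For the forward direction ($T<\infty\Rightarrow\lim_{t\to T^-}M=+\infty$), Theorem~\ref{thm-main-2} gives $\int_0^T\|\eta_x\|_{L^\infty}d\tau=\infty$, and the lower bound $\|\eta_x\|_{L^\infty}\le M+K$ reduces this to $\int_0^T M(\tau)d\tau=\infty$, so $M$ is unbounded on $[0,T)$. The same Riccati inequality transfers to $M$ in the Dini sense, $D_+M(t)\ge\tfrac{7}{4}M(t)^2-C_0$, by comparison with the characteristic where the supremum is (approximately) attained; once $M$ exceeds the threshold $2\sqrt{C_0/7}$ at some $t_*\in[0,T)$, $M$ is strictly increasing on $[t_*,T)$ and the comparison $\dot M\ge\tfrac{7}{8}M^2$ forces $M(t)\to+\infty$ within a finite time, which must coincide with $T$. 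Conversely, if $T=\infty$, the same trap argument shows $M$ can never exceed $2\sqrt{C_0/7}$ (otherwise finite-time blow-up occurs, contradicting $T=\infty$), and so $\lim_{t\to T^-}M(t)=+\infty$ is impossible unless $T<\infty$. The only delicate point is the transfer of the characteristic ODE to the Dini inequality for the nonsmooth $M(t)$, which is standard, justified by the Lipschitz regularity of $M$ inherited from $\eta\in C^1([0,T);H^{s-1})\hookrightarrow C^1([0,T);W^{1,\infty})$ in Theorem~\ref{thm-main-1}.
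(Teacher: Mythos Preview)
Your argument is correct and takes a genuinely different route from the paper's. The paper proceeds by direct $H^2$ and $H^3$ energy estimates on the original equation \eqref{CH-3}: multiplying by $\eta_{xx}$ (resp.\ differentiating and multiplying by $\eta_{xxx}$), the only terms that cannot be absorbed using $H^1$-conservation are of the form $\int_{\R}\eta_x\eta_{xx}^2\,dx$ and $\int_{\R}\eta_x\eta_{xxx}^2\,dx$; these are controlled by $\bigl(\sup_x\eta_x\bigr)\|\eta\|_{H^k}^2$, so an upper bound on $\sup_x\eta_x$ closes Gronwall and contradicts $T<\infty$. Your approach instead extracts the Riccati structure $(\partial_t+v\partial_x)\eta_x=\tfrac{7}{4}\eta_x^2+G$ along characteristics and uses it twice: first to trap $\inf_x\eta_x$ from below, which upgrades the two-sided criterion of Theorem~\ref{thm-main-2} to a statement about $\sup_x\eta_x$ alone; second to force monotone blow-up of $M(t)$ once it crosses the threshold, yielding the genuine limit (not merely $\limsup$). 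This is closer in spirit to the Constantin--Escher mechanism underlying Lemma~\ref{lem-4.3} and the paper's own proof of Theorem~\ref{thm-main-3}, and it avoids the higher-order energy computations entirely. The paper's route, by contrast, is self-contained (it does not invoke Theorem~\ref{thm-main-2}) and makes the sign structure of the energy cascade explicit.

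One small imprecision: the estimate $\|(1-\partial_x^2)^{-1}f\|_{L^\infty}\le\tfrac12\|f\|_{L^1}$ cannot be applied to the linear term $-2\eta$ inside $Q$, since $\|\eta\|_{L^1}$ is not controlled by $\|\eta\|_{H^1}$ on the line. For that term use instead $\|g\ast\eta\|_{L^\infty}\le\|g\|_{L^2}\|\eta\|_{L^2}=\tfrac12\|\eta\|_{L^2}$; the remaining terms in $Q$ are genuinely in $L^1$ with norms bounded by powers of $\|\eta_0\|_{H^1}$, so the conclusion $\|G\|_{L^\infty}\le C_0(\|\eta_0\|_{H^1})$ stands. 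Also, strictly speaking your Dini inequality threshold $2\sqrt{C_0/7}=\sqrt{4C_0/7}$ only gives $D_+M\ge 0$; to get $D_+M\ge\tfrac{7}{8}M^2$ you need $M>\sqrt{8C_0/7}$. This does not affect the argument, since $M$ becomes arbitrarily large.
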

\begin{proof}[Proof of Lemma \ref{lem-4.2}]
Applying a simple density argument, we only need to show that Lemma \ref{lem-4.2} holds with some $s\geq3.$ Here we assume $s=3$ to prove the above Lemma.\\

Multiplying the equation \eqref{CH-3} by $\eta$ and integrating by parts, we get
\begin{equation}\label{CH-a}
  \begin{split}
\frac{d}{dt}\int_{\R}(\eta^{2}+\eta_{x}^{2})dx=0.
 \end{split}
\end{equation}

On the other hand, multiplying equation \eqref{CH-3} by $\eta_{xx}$ and integrating by parts, we get
\begin{equation}\label{CH-b}
  \begin{split}
\frac{d}{dt}\int_{\R}(\eta_{x}^{2}+\eta_{xx}^{2})dx=3\int_{\R}\big(\eta\eta_{x}\eta_{xx}(1-\frac{1}{4}\eta
+\frac{1}{8}\eta^{2})+\frac{7}{2}\eta_{x}\eta_{xx}^{2}\big) dx.
 \end{split}
\end{equation}

Therefore, combining \eqref{CH-a} with \eqref{CH-b}, one can see that
\begin{equation}\label{CH-c}
  \begin{split}
\frac{d}{dt}\int_{\R}(\eta^{2}+2\eta_{x}^{2}+\eta_{xx}^{2})dx=3\int_{\R}\big(\eta\eta_{x}\eta_{xx}(1-\frac{1}{4}\eta
+\frac{1}{8}\eta^{2})+\frac{7}{2}\eta_{x}\eta_{xx}^{2}\big) dx.
 \end{split}
\end{equation}

From interpolation inequality we know that
\begin{equation}\label{CH-b1}
  \begin{split}
3\int_{\R}\eta\eta_{x}\eta_{xx}dx&=-\frac{3}{2}\int_{\R}\eta_{x}^{3}dx\leq\frac{3}{2}\|\eta_{x}\|^{3}_{L^{3}(\R)}
\leq\frac{3}{2}C\|\eta_{x}\|^{\frac{5}{2}}_{L^{2}(\R)}\|\eta_{xx}\|^{\frac{1}{2}}_{L^{2}(\R)}\\
&\leq\|\eta_{xx}\|^{2}_{L^{2}(\R)}+C\|\eta_{x}\|^{\frac{10}{3}}_{L^{2}(\R)},
 \end{split}
\end{equation}
\begin{equation}\label{CH-b2}
  \begin{split}
-\frac{3}{4}\int_{\R}\eta^{2}\eta_{x}\eta_{xx} dx&=\frac{3}{4}\int_{\R}\eta\eta_{x}^{3} dx\leq\frac{3}{4}\|\eta\|_{L^{\infty}(\R)}\|\eta_{x}\|^{3}_{L^{3}(\R)}\leq\frac{3}{8}\|\eta\|^{2}_{H^{1}(\R)}+\frac{3}{8}\|\eta_{x}\|^{6}_{L^{3}(\R)}\\
&\leq\frac{3}{8}\|\eta\|^{2}_{H^{1}(\R)}+\frac{3}{8}C\|\eta_{x}\|^{5}_{L^{2}(\R)}\|\eta_{xx}\|_{L^{2}(\R)}\\
&\leq\frac{3}{8}\|\eta_{0}\|^{2}_{H^{1}(\R)}
+\frac{3}{32}C\|\eta_{x}\|^{10}_{L^{2}(\R)}+\|\eta_{xx}\|^{2}_{L^{2}(\R)},
 \end{split}
\end{equation}
\begin{equation}\label{CH-b3}
  \begin{split}
\frac{3}{8}\int_{\R}\eta^{3}\eta_{x}\eta_{xx} dx&=-\frac{9}{16}\int_{\R}\eta^{2}\eta_{x}^{3} dx\leq\frac{9}{32}\|\eta_{0}\|^{4}_{H^{1}(\R)}+\frac{9}{32}\|\eta_{x}\|^{6}_{L^{3}(\R)}\\
&\leq\frac{9}{32}\|\eta_{0}\|^{4}_{H^{1}(\R)}+\|\eta_{xx}\|^{2}_{L^{2}(\R)}+\frac{9}{128}C\|\eta_{x}\|^{10}_{L^{2}(\R)}.
 \end{split}
\end{equation}

Assume that $T<+\infty$ and there exists $M>0$ such that
\begin{equation}\label{CH-d}
  \begin{split}
\eta_{x}(t,x)\leq M, \quad \forall(t,x)\in[0,T)\times\R.
 \end{split}
\end{equation}

Therefore, from \eqref{CH-b1} to \eqref{CH-d}, we obtain
\begin{equation}\label{CH-e}
  \begin{split}
\frac{d}{dt}\int_{\R}(\eta^{2}+2\eta_{x}^{2}+\eta_{xx}^{2})dx&\leq 3\|\eta_{xx}\|_{L^{2}}^{2}+C_{2}+\int_{\R}\frac{21}{2}\eta_{x}\eta_{xx}^{2}dx\\
&\leq(3+\frac{21}{2}M)\|\eta_{xx}\|_{L^{2}}^{2}+C_{2},\\
 \end{split}
\end{equation}
where $C_{2}=C\|\eta_{0}\|_{H^{1}(\R)}^{\frac{10}{3}}+\frac{3}{8}\|\eta_{0}\|_{H^{1}(\R)}^{2}
+C\|\eta_{0}\|_{H^{1}(\R)}^{10}+\frac{9}{32}\|\eta_{0}\|_{H^{1}(\R)}^{4}.$

Applying Gronwall's inequality to \eqref{CH-e} yields for every $t\in[0,T)$
\begin{equation}\label{CH-f}
  \begin{split}
\|\eta(t)\|_{H^{2}(\R)}^{2}\leq (2\|\eta_{0}\|_{H^{2}(\R)}^{2}+C_{2}T)e^{(3+\frac{21}{2}M)T}.
 \end{split}
\end{equation}

Differentiating equation \eqref{CH-3} with respect to $x$, and multiplying the result equation by $\eta_{xxx},$ then integrating by parts, we have
\begin{equation}\label{CH-g}
  \begin{split}
\frac{d}{dt}\int_{\R}(\eta_{xx}^{2}+\eta_{xxx}^{2})dx=&-\frac{15}{2}\int_{\R}\eta_{x}\eta_{xx}^{2}dx+\frac{15}{4}\int_{\R}\eta\eta_{x}\eta_{xx}^{2}dx
-\frac{9}{4}\int_{\R}\eta\eta_{x}^{3}\eta_{xx}dx\\
&-\frac{45}{16}\int_{\R}\eta^{2}\eta_{x}\eta_{xx}^{2}dx+\frac{35}{4}\int_{\R}\eta_{x}\eta_{xxx}^{2}dx,
 \end{split}
\end{equation}
we only need to know
\begin{equation}\label{CH-g1}
  \begin{split}
-\frac{9}{4}\int_{\R}\eta\eta_{x}^{3}\eta_{xx}dx\leq\frac{9}{16}\|\eta_{x}\|_{L^{5}}^{5}\leq\frac{9}{16}C\|\eta_{x}\|_{L^{2}}^{\frac{7}{2}}\|\eta_{xx}\|_{L^{2}}^{\frac{3}{2}}
\leq\|\eta_{xx}\|_{L^{2}}^{2}+C\|\eta_{0}\|_{H^{1}(\R)}^{14},
 \end{split}
\end{equation}
which implies that
\begin{equation}\label{CH-g2}
  \begin{split}
\frac{d}{dt}\int_{\R}(\eta_{xx}^{2}+\eta_{xxx}^{2})dx&\leq\frac{15}{2}M\int_{\R}\eta_{xx}^{2}dx+\frac{15}{4}\|\eta\|_{L^{\infty}}M\int_{\R}\eta_{xx}^{2}dx
+\|\eta_{xx}\|_{L^{2}}^{2}\\
&+C\|\eta_{0}\|_{H^{1}(\R)}^{14}
+\frac{45}{16}\|\eta\|^{2}_{L^{\infty}}M\int_{\R}\eta_{xx}^{2}dx+\frac{35}{4}M\int_{\R}\eta_{xxx}^{2}dx\\
&\leq(1+C_{3}M)\int_{\R}(\eta_{xx}^{2}+\eta_{xxx}^{2})dx+C_{4},
 \end{split}
\end{equation}
where $C_{3}=\frac{35}{4}+\frac{15}{4}\|\eta_{0}\|_{H^{1}(\R)}+\frac{45}{16}\|\eta_{0}\|_{H^{1}(\R)}^{2},$ $C_{4}=C\|\eta_{0}\|_{H^{1}(\R)}^{14}$ and we have used the assumption \eqref{CH-d}. Hence, applying Gronwall's inequality implies that
\begin{equation}\label{CH-h}
  \begin{split}
\int_{\R}(\eta_{xx}^{2}+\eta_{xxx}^{2})dx\leq(\|\eta_{0xx}\|_{H^{1}(\R)}^{2}+C_{4}T)e^{(1+C_{3}M)T},
 \end{split}
\end{equation}
together with \eqref{CH-f} yields for every $t\in[0,T)$
\begin{equation}\label{CH-i}
  \begin{split}
\|\eta(t)\|_{H^{3}(\R)}^{2}\leq[2\|\eta_{0}\|_{H^{3}(\R)}^{2}+(C_{2}+C_{4})T]e^{[4+(\frac{21}{2}+C_{3})M]T},
 \end{split}
\end{equation}
which contradicts the assumption the maximal existence time $T<+\infty$.

Conversely, the Sobolev embedding theorem $H^{s}(\R)\hookrightarrow L^{\infty}(\R)($with $s>\frac{1}{2})$ implies that if \eqref{CH-p} holds, the corresponding solution blows up in finite time, which completes the proof of Lemma \ref{lem-4.2}.
\end{proof}
\begin{lem}[see \cite{Constantin1998}]\label{lem-4.3}
Let $T>0$ and $\eta\in\mathcal{C}^{1}([0,T);H^{2}(\R)).$ Then for $\forall t\in[0,T),$ there exists at least one point $\xi(t)\in\R$ with
$$M(t)=\inf_{x\in \R}(\eta_{x}(t,x))=\eta_{x}(t,\xi(t)).$$
The function $M(t)$ is absolutely continuous on $(0,T)$ with
$$\frac{dM(t)}{dt}=\eta_{tx}(t,\xi(t)) \quad a.e. \quad on \quad(0,T).$$
\end{lem}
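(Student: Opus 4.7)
The plan is to establish the three claims in order: pointwise attainment of the infimum, Lipschitz (hence absolute) continuity of $M$, and the derivative formula via an envelope-type sandwich argument. Throughout I will exploit the embedding $H^{2}(\R)\hookrightarrow C^{1}_{0}(\R)$ (functions whose values and first derivative tend to $0$ at infinity), which holds in one dimension.

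First, fix $t\in[0,T)$. Since $\eta(t,\cdot)\in H^{2}(\R)$, the function $\eta_{x}(t,\cdot)$ lies in $H^{1}(\R)$, hence is continuous with $\eta_{x}(t,x)\to 0$ as $|x|\to\infty$. Therefore, either $\eta_{x}(t,\cdot)\equiv 0$ (in which case any $\xi(t)$ works) or there exists $R>0$ such that $|\eta_{x}(t,x)|<|M(t)|/2$ for $|x|>R$, and the infimum is attained on the compact set $[-R,R]$ by continuity. This proves the existence of at least one $\xi(t)\in\R$ realising $M(t)$.

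Next I would prove Lipschitz continuity. The hypothesis $\eta\in\mathcal{C}^{1}([0,T);H^{2})$ gives $\partial_{t}\eta\in\mathcal{C}([0,T);H^{2})$, hence $\eta_{tx}\in\mathcal{C}([0,T);H^{1})\hookrightarrow\mathcal{C}([0,T);L^{\infty})$. On any compact subinterval $[a,b]\subset(0,T)$ set $K:=\sup_{\tau\in[a,b]}\|\eta_{tx}(\tau)\|_{L^{\infty}}<\infty$. For $s<t$ in $[a,b]$, the minimality of $\xi(s)$ and $\xi(t)$ yields the two-sided comparison
\begin{equation*}
\int_{s}^{t}\eta_{tx}(\tau,\xi(t))\,d\tau \;=\; \eta_{x}(t,\xi(t))-\eta_{x}(s,\xi(t)) \;\leq\; M(t)-M(s) \;\leq\; \eta_{x}(t,\xi(s))-\eta_{x}(s,\xi(s)) \;=\;\int_{s}^{t}\eta_{tx}(\tau,\xi(s))\,d\tau,
\end{equation*}
so $|M(t)-M(s)|\leq K|t-s|$. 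This proves $M$ is Lipschitz on $[a,b]$, hence absolutely continuous on $(0,T)$, and in particular differentiable almost everywhere by Rademacher's theorem.

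Finally, at any $t\in(0,T)$ where $M'(t)$ exists, I would pick any minimiser $\xi(t)$ and derive the formula by a sandwich argument that bypasses the (possible) discontinuity of $t\mapsto\xi(t)$. For $h>0$, the inequality $M(t+h)\leq\eta_{x}(t+h,\xi(t))$ together with $M(t)=\eta_{x}(t,\xi(t))$ gives
\begin{equation*}
\frac{M(t+h)-M(t)}{h}\;\leq\;\frac{\eta_{x}(t+h,\xi(t))-\eta_{x}(t,\xi(t))}{h}\;\xrightarrow[h\to 0^{+}]{}\;\eta_{tx}(t,\xi(t)),
\end{equation*}
while for $h<0$ the same numerator inequality combined with division by the negative number $h$ reverses the sense and yields the reverse bound in the limit $h\to 0^{-}$. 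Since $M$ is assumed differentiable at $t$, both one-sided limits equal $M'(t)$, forcing $M'(t)=\eta_{tx}(t,\xi(t))$. This holds for almost every $t$, completing the proof.

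The only real delicacy is in the final step: the map $t\mapsto \xi(t)$ need not be single-valued or continuous, so one must avoid arguments that differentiate through $\xi$. The sandwich estimate above sidesteps this cleanly by only using the \emph{minimality inequality} $M(\cdot)\leq\eta_{x}(\cdot,\xi(t))$ with $\xi(t)$ held fixed, so the continuity of $\xi$ is never invoked.
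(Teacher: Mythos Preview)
Your proof is correct. The paper does not supply its own argument for this lemma but simply cites Constantin--Escher \cite{Constantin1998}; the approach you give---attainment of the infimum via $\eta_{x}(t,\cdot)\in H^{1}(\R)\hookrightarrow C_{0}(\R)$, local Lipschitz continuity from the two-sided comparison with the frozen minimisers $\xi(s)$ and $\xi(t)$, and the envelope sandwich for $M'(t)$---is exactly the one found in that reference. One cosmetic remark: in your first step the dichotomy ``$\eta_{x}\equiv 0$ or $|M(t)|>0$'' deserves a one-line justification (if $M(t)=0$ then $\eta_{x}\geq 0$, so $\eta$ is nondecreasing and vanishes at $\pm\infty$, forcing $\eta\equiv 0$), but this does not affect the validity of the argument.
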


\begin{proof}[Proof of Theorem \ref{thm-main-3}]

The technique used here is inspired from \cite{Gui2011}. Similar to the proof of Lemma \ref{lem-4.2}, we assume $s=3$ to prove Theorem \ref{thm-main-3}, now we consider the Lagrangian scale of \eqref{1.4} the initial value problem
\begin{equation*}
\left\{
    \begin{array}{l}
        \displaystyle\frac{\partial q}{\partial t}=\eta(t,q), \ \ \ \ \ \ \ \ \ \ \ \forall 0<t<T,x\in \R,\\
        q(0,x)=x, \ \ \ \ \ \ \ \ \ \ \ \ \ \forall x\in \R,
 \end{array}
    \right.
    \end{equation*}
where $\eta\in \mathcal{C}([0,T);H^{s})$ with $s>\frac{3}{2}$, and $T>0$ being the maximal time of existence. A direct calculation also yields $q_{tx}(t,x)=\eta_{x}(t,q(t,x))q_{x}(t,x)$. Hence for $t>0,x\in \R$, we have
$$q_{x}(t,x)=e^{\int_{0}^{t}\eta_{x}(\tau,q(\tau,x))}d\tau>0,$$
which implies that $q(t,\cdot): \R\rightarrow \R$ is a diffeomorphism of the line for every $t\in[0,T)$. By Lemma \ref{lem-4.3}, we know that $\xi(t)$ [with $t\in[0,T)$] exists such that
$$M(t):=\eta_{x}(t,\xi(t))=\inf_{x\in \R}(\eta_{x}(t,x))\quad \forall t\in[0,T).$$
\quad And then
\begin{equation}\label{CH-37}
\eta_{xx}(t,\xi(t))=0\quad for \quad a.e.\quad t\in[0,T).
\end{equation}
\quad On the other hand, since $q(t,\cdot):\R\rightarrow\R$ is the diffeomorphism for every $t\in[0,T),$ there exists $x_{1}(t)\in \R$ such that
$$q(t,x_{1}(t))=\xi(t)\quad \forall t\in[0,T).$$

Differentiating both sides of the first equation of \eqref{1.4} with respect to $x$, and we get
\begin{equation}\label{CH-38}
  \begin{split}
\eta_{tx}=&\eta_{xx}+\frac{7}{2}\eta_{x}^{2}+\frac{7}{2}\eta\eta_{xx}+(- 2g_{x}\ast\eta_{x})+(-5g_{x}\ast\eta\eta_{x})\\
&+\frac{7}{2}g_{x}\ast\eta_{x}\eta_{xx}+\frac{3}{8}g_{x}\ast\eta^{2}\eta_{x}
+(-\frac{3}{16}g_{x}\ast\eta^{3}\eta_{x}).
 \end{split}
\end{equation}
\quad Given $x\in\R,$ let
$$M(t)=\eta_{x}(t,\xi(t)), \quad \forall t\in[0,T),$$
we have
\begin{equation}\label{CH-39}
\frac{d}{dt}M(t)=\frac{7}{2}M^{2}(t)+f(t,\xi(t)),
\end{equation}
for $t\in[0,T),$ where $f$ represents the function

\begin{equation}\label{CH-40}
  \begin{split}
f(t,\xi(t))=&\big((-2g_{x}\ast\eta_{x})+(-5g_{x}\ast\eta\eta_{x})\\
&+\frac{3}{8}g_{x}\ast\eta^{2}\eta_{x}
+(-\frac{3}{16}g_{x}\ast\eta^{3}\eta_{x})\big)(t,\xi(t)).
 \end{split}
\end{equation}
\quad We use the fact that $H(\eta)$ is the conservation law of the \eqref{1.4}. On the other hand, the continuous embedding of $H^{1}(\R)$ into $L^{\infty}(\R)$, applying Young's inequality and $g(x)=\frac{1}{2}e^{-|x|}$ lead to
\begin{equation}\label{CH-41}
  \begin{split}
-2g_{x}\ast\eta_{x}(t,\xi(t))&\geq-2|g_{x}\ast\eta_{x}|\geq -2\|g_{x}\|_{L^{2}}\|\eta_{x}\|_{L^{2}}\geq-\|\eta_{x}\|_{L^{2}}\\
&\geq-(\frac{1}{2}+\frac{1}{2}\|\eta_{x}\|_{L^{2}}^{2})\geq-(\frac{1}{2}+\frac{1}{2}\|\eta_{0}\|_{H^{1}}^{2}),
 \end{split}
\end{equation}
and
\begin{equation}\label{CH-42}
-5g_{x}\ast\eta\eta_{x}(t,\xi(t))\geq-5|g_{x}\ast\eta\eta_{x}|\geq-5\|g_{x}\|_{L^{\infty}}\|\eta\eta_{x}\|_{L^{1}}\geq-\frac{5}{2}\|\eta_{0}\|_{H^{1}}^{2}.
\end{equation}
\quad Similarly, we have
\begin{equation}\label{CH-43}
  \begin{split}
|\frac{3}{8}g_{x}\ast\eta^{2}\eta_{x}|(t,\xi(t))&\leq\frac{3}{8}\|\eta^{2}\eta_{x}\|_{L^{1}}\leq\frac{3}{16}\|\eta\|_{L^{\infty}}\int_{\R}\eta^{2}+\eta_{x}^{2}dx\\
&\leq\frac{3}{16}\|\eta_{0}\|_{H^{1}}\int_{\R}\eta_{0}^{2}+\eta_{0x}^{2}dx\leq\frac{3}{16}\|\eta_{0}\|_{H^{1}}^{3},
 \end{split}
\end{equation}
so we know that
\begin{equation}\label{CH-44}
\frac{3}{8}g_{x}\ast\eta^{2}\eta_{x}(t,\xi(t))\geq-\frac{3}{16}\|\eta_{0}\|_{H^{1}}^{3},
\end{equation}
and
\begin{equation}\label{CH-45}
-\frac{3}{16}g_{x}\ast\eta^{3}\eta_{x}(t,\xi(t))\geq-\frac{3}{16}|g_{x}\ast\eta^{3}\eta_{x}|\geq-\frac{3}{16}\|\eta^{3}\eta_{x}\|_{L^{1}}
\geq-\frac{3}{32}\|\eta_{0}\|_{H^{1}}^{4}.
\end{equation}

Combining \eqref{CH-41} to \eqref{CH-45}, we get
$$f(t,\xi(t))\geq-(\frac{1}{2}+3\|\eta_{0}\|_{H^{1}}^{2}+\frac{3}{16}\|\eta_{0}\|_{H^{1}}^{3}+\frac{3}{32}\|\eta_{0}\|_{H^{1}}^{4}),$$
then we have
\begin{equation}\label{CH-47}
\frac{d}{dt}M(t)\geq\frac{7}{2}M^{2}(t)-C_{0},
\end{equation}
where $C_{0}=\frac{1}{2}+3\|\eta_{0}\|_{H^{1}}^{2}+\frac{3}{16}\|\eta_{0}\|_{H^{1}}^{3}+\frac{3}{32}\|\eta_{0}\|_{H^{1}}^{4}.$\\

By the assumption $M(0)=\eta_{0x}(x_{0})>\sqrt{\frac{2}{7}C_{0}},$ we have $M^{2}(0)>\frac{2}{7}C_{0}.$ We now claim that is true for any $t\in[0,T).$ In fact, assuming the contrary would, in view of $M(t)$ being continuous, ensure the existence of of $t_{0}\in[0,T)$ such that $M^{2}(t)>\frac{2}{7}C_{0}$ for $t\in[0,t_{0})$ but $M^{2}(t_{0})=\frac{2}{7}C_{0},$
combining this with \eqref{CH-47} would give
$$\frac{d}{dt}M(t)\geq 0\quad a.e.\quad on \quad[0,t_{0}).$$
\quad Since $M(t)$ is absolutely continuous on $[0,t_{0}],$ an integration of this inequality would give the following inequality and we get the contradiction
$$M(t_{0})>M(0)=\eta_{0x}(x_{0})>\sqrt{\frac{2}{7}C_{0}},$$
this proves the previous claim.

Using this together with \eqref{CH-47} and the absolute continuity of the function $M(t),$ we see that $M(t)$ is strictly increasing on $[0,T).$ Therefore, choose that $\sigma\in(0,1)$ such that $\sqrt{\sigma}M(0)=\sqrt{\frac{2}{7}C_{0}},$ then we get from \eqref{CH-47} that
$$\frac{d}{dt}M(t)\geq\frac{7}{2}M^{2}(t)-\frac{7}{2}\sigma M^{2}(0)\geq\frac{7(1-\sigma)}{2}M^{2}(t)\quad a.e.\quad on \quad[0,T).$$
\quad Since $M$ is locally Lipschitz on $[0,T)$ and strictly positive, it follows that $\frac{1}{M}$ is locally Lipschitz on $[0,T).$ This gives
\begin{equation}\label{CH-48}
\frac{d}{dt}\Big(\frac{1}{M(t)}\Big)=-\frac{1}{M^{2}(t)}\frac{d}{dt}M(t)\leq-\frac{7(1-\sigma)}{2}\quad a.e.\quad on \quad[0,T).
\end{equation}
\quad Integration of this inequality yields
$$\frac{1}{M(t)}-\frac{1}{M(0)}\leq-\frac{7(1-\sigma)}{2}t,\quad t\in[0,T).$$
\quad Since $M(t)>0$ on $[0,T),$ we get the maximal existence time $T\leq\frac{2}{7(1-\sigma)M(0)}<\infty.$ Moreover, thanks to $M(0)=\eta_{0x}(x_{0})>0$ again, \eqref{CH-48} implies that $$\eta_{x}(t,\xi(t))=M(t)\geq\frac{\eta_{0x}(x_{0})}{1-\frac{7(1-\sigma)}{2}t\eta_{0x}(x_{0})}\rightarrow+\infty,$$
as $t\rightarrow\frac{2}{7(1-\sigma)\eta_{0x}(x_{0})}$. Therefore, thanks to Lemma \ref{lem-4.2}, we complete the proof of Theorem \ref{thm-main-3}.

\end{proof}

\noindent {\bf Acknowledgments.} The author would like to thank professor Guilong Gui for his valuable comments and suggestions. The authors are partially supported by the National Natural Science Foundation of China under the grants 11571279, 11331005, and 11601423.

 \end{document}